\theoremstyle{plain}
\newtheorem{thm}{\protect\theoremname}[section]
\theoremstyle{plain}
\newtheorem{cor}[thm]{\protect\corollaryname}
\newtheorem*{cor*}{\protect\corollaryname}
\newtheorem{lem}[thm]{\protect\lemmaname}
\newtheorem*{lem*}{\protect\lemmaname}
\newtheorem*{prop*}{\protect\propositionname}
\theoremstyle{remark}
\newtheorem{rem}{\protect\remarkname}
\DeclareMathOperator{\dist }{dist}
\DeclareMathOperator{\Proj}{Proj}
\let\orgdescriptionlabel\descriptionlabel
\renewcommand*{\descriptionlabel}[1]{%
	\let\orglabel\label
	\let\label\@gobble
	\phantomsection
	\edef\@currentlabel{#1}%
	\let\label\orglabel
	\orgdescriptionlabel{#1}%
}
\providecommand{\corollaryname}{Corollary}
\providecommand{\lemmaname}{Lemma}
\providecommand{\remarkname}{Remark}
\providecommand{\theoremname}{Theorem}
\providecommand{\propositionname}{Proposition}
\def\RR{\mathbb{R}}
\def\HH{\mathcal{H}}
\def\eps{\varepsilon}
\def\Pp{\mathcal{P}}
\def\to{\rightarrow}
\def\tu{\tilde{u}}
\begin{document}
	
	\title{Regularity of solutions in semilinear elliptic theory}
	\date{}
	
	\author{E. Indrei, A. Minne, L. Nurbekyan}
	
	\def\signei{\bigskip\begin{center} {\sc Emanuel Indrei\par\vspace{3mm}Center for Nonlinear Analysis\\  
				Carnegie Mellon University\\
				Pittsburgh, PA 15213, USA\\
				email:} {\tt egi@cmu.edu }
		\end{center}}

		\def\signam{\bigskip\begin{center} {\sc Andreas Minne \par\vspace{3mm}
					Department of Mathematics\\
					KTH Royal Institute of Technology\\
					100 44 Stockholm, Sweden\\
					email:} {\tt minne@kth.se}
			\end{center}}
			
			\def\signln{\bigskip\begin{center} {\sc Levon Nurbekyan \par\vspace{3mm}
						CEMSE Division,\\
						King Abdullah University of Science and Technology (KAUST)\\
						Thuwal 23955-6900, Saudi Arabia\\
						email:} {\tt levon.nurbekyan@kaust.edu.sa}
				\end{center}}

				\maketitle
				
				\begin{abstract}
					We study the semilinear Poisson equation 
					\begin{equation} \label{pro}
					\Delta u = f(x, u) \hskip .2 in \text{in} \hskip .2 in B_1.
					\end{equation}
					Our main results provide conditions on $f$ which ensure that weak solutions of \eqref{pro} belong to $C^{1,1}(B_{1/2})$. In some configurations, the conditions are sharp.

				\end{abstract}

				\makeatletter
				\def\blfootnote{\gdef\@thefnmark{}\@footnotetext}
				\makeatother
				
				\section{Introduction}
				
				The semilinear Poisson equation \eqref{pro} encodes stationary states of the nonlinear heat, wave, and Schr\"odinger equation. In the case when $f$ is the Heaviside function in the $u$-variable, \eqref{pro} reduces to the classical obstacle problem. 
				 For an introduction to classical semilinear theory, see \cite{MR2722059,Caz}.        

It is well-known that weak solutions of \eqref{pro} belong to the usual Sobolev space $W^{2,p}(B_{1/2})$ for any $1\le p<\infty$ provided $f \in L^\infty$. Recent research activity has thus focused on identifying conditions on $f$ which ensure $W^{2,\infty}(B_{1/2})$ regularity of $u$.			
								
\subsection{The classical theory}
				  
There are simple examples which illustrate that continuity of $f=f(x)$ does not necessarily imply that $u$ has bounded second derivatives: for $p \in (0,1)$ and $x \in \mathbb{R}^2$ such that $|x|<1$, the function $$u(x)=x_1x_2(-\log|x|)^p$$ has a continuous Laplacian but is not in $C^{1,1}$ \cite{Sha15}. However, if $f$ is H\"older continuous, then it is well-known that $u \in C^{2,\alpha}$; if $f$ is Dini continuous, then $u \in C^2$ \cite{gilbarg:01, MR1713596}. The sharp condition which guarantees bounded second derivatives of $u$ is the $C^{1,1}$ regularity of $f * N$ where $N$ is the Newtonian potential and $*$ denotes convolution; this requirement is strictly weaker than Dini continuity of $f$. 

In the general case, the state-of-the-art is a theorem of Shahgholian \cite{S03} which states that $u \in C^{1,1}$ whenever $f=f(x,u)$ is Lipschitz in $x$, uniformly in $u$, and $\partial_u f \ge -C$ weakly for some $C \in \mathbb{R}$. In some configurations this illustrates regularity for continuous functions $f=f(u)$ which are strictly below the classical Dini-threshold in the $u$-variable, e.g. the odd reflection of $$f(t)=-\frac{1}{\log(t)}$$ about the origin. Shahgholian's theorem is proved via the celebrated Alt-Caffarelli-Friedman (ACF) monotonicity formula and it seems difficult to weaken the assumptions by this method. On the other hand, Koch and Nadirashvili \cite{KN} recently constructed an example which illustrates that the continuity of $f$ is not sufficient to deduce that weak solutions of $\Delta u = f(u)$ are in $C^{1,1}$.   

We say $f=f(x,u)$ satisfies assumption \textbf{A} provided that $f$ is Dini continuous in $u$, uniformly in $x$, and has a $C^{1,1}$ Newtonian potential in $x$, uniformly in $u$ (see \S 3). One of our main results is the following statement.   			


\begin{thm}\label{contcase}
					Suppose $f$ satisfies assumption \textbf{A}.
					Then any solution of \eqref{pro} is $C^{1,1}$ in $B_{1/2}$. 
				\end{thm}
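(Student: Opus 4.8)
The hypothesis has two parts: $f$ is Dini continuous in $u$ uniformly in $x$, and $f(\cdot, u)$ has a $C^{1,1}$ Newtonian potential uniformly in $u$. The goal is $C^{1,1}$ interior regularity. Let me think about the standard freezing/bootstrap approach.

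Since $f \in L^\infty$, we already know $u \in W^{2,p}$ for all $p$, hence $u \in C^{1,\alpha}$ for all $\alpha < 1$. So $u$ is continuous. Now the idea: fix a point $x_0$, and consider $f(x, u(x))$. We want to show $D^2 u$ is bounded near $x_0$.

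Decompose $f(x, u(x)) = [f(x, u(x)) - f(x, u(x_0))] + f(x, u(x_0))$.

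For the second term $g(x) := f(x, u(x_0))$: this is a fixed function of $x$ (with $u(x_0)$ a constant), and by hypothesis it has a $C^{1,1}$ Newtonian potential, uniformly in the parameter $u(x_0)$. So the Poisson equation $\Delta v = g$ gives $v \in C^{1,1}$ with uniform bounds.

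For the first term $h(x) := f(x, u(x)) - f(x, u(x_0))$: by Dini continuity in $u$ uniformly in $x$, $|h(x)| \le \omega(|u(x) - u(x_0)|)$ where $\omega$ is a Dini modulus. Since $u \in C^{1,\alpha}$, $|u(x) - u(x_0)| \le C|x - x_0|^\alpha$... hmm wait, actually $u \in C^1$ so $|u(x)-u(x_0)| \le C|x-x_0|$ near $x_0$ (using the gradient bound). So $|h(x)| \le \omega(C|x-x_0|)$, which is itself a Dini modulus of continuity centered at $x_0$.

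Now here's the subtle point: $h$ vanishes at $x_0$ and has a Dini-type decay, so the Newtonian potential of $h$ should be $C^{1,1}$ *at the point $x_0$* — i.e., $|D^2 (N * h)|$ controlled near $x_0$, or more precisely $u$ has a second-order Taylor expansion at $x_0$ with controlled error. This is essentially the classical fact that Dini-continuous RHS gives $C^2$ (or at least pointwise $C^{1,1}$) regularity, but applied to the "difference" which is Dini *at the single point* $x_0$.

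**The proof structure I'd follow:**

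1. **Preliminary regularity.** Since $f \in L^\infty(B_1)$, $u \in W^{2,p}(B_{3/4})$ for all $p < \infty$, hence $u \in C^{1,\beta}(B_{3/4})$ for all $\beta \in (0,1)$. Fix bounds: $\|u\|_{C^{1,\beta}(B_{3/4})} \le M$.

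2. **Pointwise decomposition.** Fix $x_0 \in B_{1/2}$. Write, for $x$ in a ball $B_r(x_0) \subset B_{3/4}$,
$$\Delta u(x) = f(x, u(x)) = \underbrace{f(x, u(x_0))}_{=:g(x)} + \underbrace{\big(f(x,u(x)) - f(x,u(x_0))\big)}_{=:h(x)}.$$

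3. **Handle $g$ via the Newtonian potential hypothesis.** Let $w = N * (g \chi_{B_r(x_0)})$ solve $\Delta w = g$. By assumption **A**, $N * f(\cdot, s) \in C^{1,1}$ with a bound uniform in $s$; taking $s = u(x_0)$ gives $\|D^2 w\|_{L^\infty(B_{r/2}(x_0))} \le C$ uniformly in $x_0$.

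4. **Handle $h$ via Dini continuity.** By the gradient bound, $|u(x) - u(x_0)| \le M|x - x_0|$, so Dini continuity of $f$ in $u$ gives $|h(x)| \le \omega(M|x-x_0|)$ with $\int_0^1 \omega(t)/t\, dt < \infty$. This is a Dini modulus concentrated at $x_0$. The classical interior estimate (the $C^{1,1}/C^2$ theory for Dini-continuous data, e.g. \cite{MR1713596}) applied to $\Delta z = h$ with $z = N*(h\chi_{B_r(x_0)})$ gives that $z$ has a second-order expansion at $x_0$:
$$|z(x) - z(x_0) - \nabla z(x_0)\cdot(x-x_0) - \tfrac12 (x-x_0)^\top D^2z(x_0)(x-x_0)| \le C|x-x_0|^2 \sigma(|x-x_0|)$$
with $\sigma \to 0$; in particular $|D^2 z| \le C$ near $x_0$ (bounded, with constant depending only on $\omega$, $M$, dimension).

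5. **Combine.** $u - w - z$ is harmonic in $B_r(x_0)$, hence smooth, with interior estimates bounding its second derivatives by its $L^\infty$ norm, which is controlled by $\|u\|_\infty + \|w\|_\infty + \|z\|_\infty \le C$. Therefore $|D^2 u(x_0)|$ (in the $W^{2,p}$-a.e. sense, or via the pointwise expansion) is bounded by a constant $C$ independent of $x_0 \in B_{1/2}$. Since this holds for a.e. $x_0$, $\|u\|_{C^{1,1}(B_{1/2})} \le C$. $\blacksquare$

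**What I expect to be the main obstacle.**

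The delicate step is **Step 4** — making rigorous that the Newtonian potential of a function $h$ satisfying $|h(x)| \le \omega(|x - x_0|)$ (a "pointwise Dini" condition at $x_0$) is $C^{1,1}$ at $x_0$ with a quantitative bound. The classical statement usually assumes $h$ is *globally* Dini continuous (i.e., $|h(x) - h(y)| \le \omega(|x-y|)$ everywhere), not just this one-point decay. One has to check whether the pointwise version suffices — and it essentially does, because the standard proof estimates $D^2(N*h)(x_0)$ via an integral like $\int \frac{|h(x) - h(x_0)|}{|x - x_0|^d}\,dx$ (with the average-subtraction trick), and here $h(x_0) = 0$ and $|h(x)| \le \omega(|x-x_0|)$ makes that integral converge precisely under the Dini condition. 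But some care is needed: the relevant singular integral estimate for $D^2 N * h$ requires controlling $\int_{|x-x_0|>\rho} |D^2 N(x_0 - x)|\, |h(x)|\,dx$ plus a term involving the modulus, and the cancellation of the Calderón–Zygmund kernel must be exploited on the annuli. A clean way to organize this is to cite or reprove the "Dini implies pointwise $C^{1,1}$" lemma in the form: if $\Delta z = h$ in $B_1$, $h \in L^\infty$, and $\fint_{B_\rho(x_0)} |h - h(x_0)| \le \omega(\rho)$ with $\omega$ Dini, then $z$ is punctually $C^{1,1}$ at $x_0$ with $|D^2 z(x_0)| \le C(\|h\|_\infty + \int_0^1 \omega(t)\,dt/t)$. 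Then the only remaining subtlety is that the constant $M$ from the $C^1$ bound enters through $\omega(Mt)$, which is still Dini, and that everything is uniform in $x_0$. A secondary (minor) technical point is that the localization cutoff $\chi_{B_r(x_0)}$ introduces an error supported away from $x_0$, which contributes a smooth (harmonic-like) term with controlled derivatives, so it is harmless.
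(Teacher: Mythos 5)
Your proposal is correct, and the core idea coincides with the paper's: freeze the $u$-argument at $u(x_0)$ and compare $u$ with the Newtonian potential $v_{u(x_0)}=N*f(\cdot,u(x_0))$, so that the residual $h(x)=f(x,u(x))-f(x,u(x_0))$ vanishes at $x_0$ with the Dini decay $|h(x)|\le \omega(M|x-x_0|)$. What genuinely differs is the machinery used to convert this into a second-derivative bound. You estimate the Calder\'on--Zygmund singular integral for $D^2(N*h)$ at $x_0$ directly, exploiting the one-point decay of $h$ and the Dini condition $\int_0^1\omega(t)\,dt/t<\infty$ to make the dyadic-annulus sum converge, and you correctly flag that this requires a ``pointwise Dini $\Rightarrow$ punctual $C^{1,1}$'' lemma together with a check that the punctual Hessian agrees with the Lebesgue-point value of $D^2u$ (both are true, but are exactly the spots where care is needed). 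The paper instead avoids singular integrals entirely: it works with the $L^2$ projection $\Pi_y(u,r)$ of rescaled Hessians onto $D^2\mathcal{P}_2$, compares $\Pi_y(u,\cdot)$ and $\Pi_y(v_{u(y)},\cdot)$ scale by scale via the $L^2$ bound of Lemma \ref{lma: keyineq}, telescopes the resulting $\sum_k\omega(cr/2^k)$ (the same Dini sum that appears in your annulus estimate), and then invokes Lemma \ref{c11} to convert uniform boundedness of the projections into an a.e.\ bound on $D^2u$. The trade-off: your route is shorter if one is willing to cite or reprove a pointwise Schauder-type lemma, while the paper's projection approach is softer --- only $W^{2,p}$/$BMO$ Calder\'on--Zygmund and Poincar\'e enter --- and, more importantly, is precisely the machinery that the paper reuses for the free boundary Theorems \ref{AB}--\ref{thm: no-sign}, where a pointwise singular-integral argument would be harder to adapt across the set $\{u=0\}$.
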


Our assumption includes functions which fail to satisfy both conditions in Shahgholian's theorem, e.g. $$f(x_1,x_2,t)=\frac{x_1}{\log(|x_2|)(-\log |t|)^p},$$ for $p>1$, $x=(x_1,x_2) \in B_1$ and $t\in (-1,1)$. The Newtonian potential assumption in the $x$-variable is essentially sharp whereas the condition in the $t$-variable is in general not comparable with Shahgholian's assumption.  

The proof of Theorem \ref{contcase} does not invoke monotonicity formulas and is self-contained.  We consider the $L^2$ projection of $D^2u$ on the space of Hessians generated by second order homogeneous harmonic polynomials on balls with radius $r>0$ and show that the projections stay uniformly bounded as $r \rightarrow 0^+$. Although this approach has proven effective in dealing with a variety of free boundary problems \cite{ALS13, FS, IM15, IM2}, Theorem \ref{contcase} illustrates that it is also useful in extending and refining the classical elliptic theory.             
 
\subsection{Singular case: the free boundary theory}

In \S 4 we study the PDE \eqref{pro} for functions $f=f(x,u)$ which are discontinuous in the $u$-variable at the origin. 

If the discontinuity of $f$ is a jump discontinuity, \eqref{pro} has the structure						
\begin{equation}\label{jump}
f(x,u)=g_1(x,u)\chi_{\{u>0\}}+g_2(x,u)\chi_{\{u<0\}},
\end{equation}
where $g_1,g_2$ are continuous functions such that
\begin{equation}
g_1(x,0)\neq g_2(x,0),\quad \forall x \in B_1,
\end{equation}
and $\chi_\Omega$ defines the indicator function of the set $\Omega$.
									
									Our aim is to find the most general class of coefficients $g_i$ which generate interior $C^{1,1}$ regularity. 
									
									The classical obstacle problem is obtained by letting $g_1=1,g_2=0$, and it is well-known that solutions have second derivatives in $L^\infty$ \cite{MR2962060}. Nevertheless, by selecting $g_1=-1,g_2=0$, one obtains the so-called unstable obstacle problem. Elliptic theory and the Sobolev embedding theorem imply that any weak solution belongs to $C^{1,\alpha}$ for any $0<\alpha<1$. It turns out that this is the best one can hope for: there exists a solution which fails to be in $C^{1,1}$ \cite{MR2289547}. Hence, if there is a jump at the origin, $C^{1,1}$ regularity can hold only if the jump is positive and this gives rise to:
									
									\vskip .3 cm
									\textbf{Assumption B.} $g_1(x,0)-g_2(x,0) \geq \sigma_0,\ x \in B_1$ for some $\sigma_0>0$.
									\vskip .3 cm
									
									The free boundary $\Gamma=\partial \{u \neq 0\}$ consists of two parts: $\Gamma^0=\Gamma \cap \{\nabla u = 0\}$ and $\Gamma^1=\Gamma \cap \{\nabla u \neq 0\}$. The main difficulty in proving $C^{1,1}$ regularity is the analysis of points where the gradient of the function vanishes. In this direction we establish the following result. 
									
									\begin{thm}\label{AB}
 Suppose $g_1,g_2$ satisfy \textbf{A} and \textbf{B}. Then if $u$ is a solution of \eqref{pro}, $\|u\|_{C^{1,1}(K)}<\infty$ for any $K \Subset B_{1/2}(0)\setminus \overline{\Gamma^1}$.
\end{thm}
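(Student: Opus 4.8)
The plan is to reduce Theorem~\ref{AB} to the continuous case (Theorem~\ref{contcase}) by a blow-up / dichotomy argument at each free boundary point where the gradient vanishes, leveraging the positive-jump hypothesis \textbf{B} to control the geometry. Fix $K \Subset B_{1/2}\setminus\overline{\Gamma^1}$. On the open set $\{u\neq 0\}$ the equation is $\Delta u = g_1(x,u)$ or $\Delta u = g_2(x,u)$, each of which has a Dini-continuous right-hand side with $C^{1,1}$ Newtonian potential in $x$; by Theorem~\ref{contcase} (applied in small balls not meeting $\Gamma$) $u$ has locally bounded second derivatives there, with a bound that a priori degenerates near $\Gamma$. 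So the whole content is a uniform estimate $|D^2u(x)|\le C$ as $x\to x_0$ for $x_0\in\Gamma^0\cap K$. Since $K$ avoids $\overline{\Gamma^1}$, every free boundary point in a neighborhood of $K$ lies in $\Gamma^0$, i.e. has $\nabla u=0$; this is exactly the configuration where one expects quadratic growth $|u(x)|\le C|x-x_0|^2$.

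First I would establish the quadratic growth estimate $\sup_{B_r(x_0)}|u|\le Cr^2$ for $x_0\in\Gamma^0$, uniformly for $x_0$ in a neighborhood of $K$. The natural route is the standard contradiction-and-rescaling scheme: if no such $C$ works, pick points $x_j\in\Gamma^0$ and radii $r_j$ with $M_j:=\sup_{B_{r_j}(x_j)}|u|/r_j^2\to\infty$, form the rescalings $u_j(y)=u(x_j+r_jy)/(M_jr_j^2)$, and extract a limit. Because $\Delta u$ is bounded, $M_jr_j^2\Delta u_j$ is bounded, so $\Delta u_j\to 0$ in the region where the normalization does not blow up; the limit $u_0$ is harmonic (or a global solution with controlled growth), satisfies $u_0(0)=0$, $\nabla u_0(0)=0$, $\sup_{B_1}|u_0|=1$, and has at most quadratic growth, forcing $u_0$ to be a nonzero quadratic harmonic polynomial. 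One then has to derive a contradiction from the free-boundary condition: at $0$ the density of $\{u=0\}$ must be nontrivial because of the jump \textbf{B} (the two-phase balance $\Delta u = g_1\chi_{\{u>0\}}+g_2\chi_{\{u<0\}}$ with $g_1(x,0)-g_2(x,0)\ge\sigma_0$ prevents $u$ from behaving like a sign-changing harmonic polynomial through $0$), while a harmonic polynomial vanishing to second order has a zero set of measure zero — contradiction. This is the step I expect to be the main obstacle: making the blow-up limit and the free-boundary condition interact rigorously when $f$ is merely Dini in $u$ (so one cannot differentiate the equation), and ensuring the rescaled equations pass to the limit despite the $x$-dependence; the $C^{1,1}$-Newtonian-potential hypothesis in $x$ is what keeps the inhomogeneous part under control after rescaling.

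Next, from quadratic growth at $\Gamma^0$ I would upgrade to the full $C^{1,1}$ bound on $K$. The mechanism is by now classical in obstacle-type problems: cover $K$; for $x\in K\cap\{u\neq 0\}$ let $d(x)=\dist(x,\Gamma)$; if $d(x)$ is comparable to $\dist(x,\partial B_{1/2})$ there is nothing to prove, so assume $x$ is close to a free boundary point $x_0\in\Gamma^0$ with $|x-x_0|=d(x)=:\rho$. Inside $B_{\rho/2}(x)\subset\{u\neq 0\}$ the function solves an equation with Dini-continuous, $C^{1,1}$-potential right-hand side, so interior estimates from Theorem~\ref{contcase}'s proof give $|D^2u(x)|\le C\big(\rho^{-2}\sup_{B_\rho(x_0)}|u| + (\text{potential terms})\big)\le C$, the last inequality by the quadratic growth just proved. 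Patching these pointwise Hessian bounds over $K$ yields $\|u\|_{C^{1,1}(K)}<\infty$. I would also need a preliminary observation that solutions are automatically $C^{1,\alpha}$ (from $\Delta u\in L^\infty$ and Sobolev embedding, as noted in the excerpt for the unstable case), which gives the qualitative regularity needed to even speak of $\Gamma^0$ versus $\Gamma^1$ and to run the compactness arguments; and a remark that $K\Subset B_{1/2}\setminus\overline{\Gamma^1}$ guarantees the dichotomy "either away from $\Gamma$ or near $\Gamma^0$" with a uniform gap, which is what makes the constants in the covering argument uniform.
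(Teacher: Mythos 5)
Your high-level reduction is sound and matches the paper's strategy: the point is to prove quadratic growth $\sup_{B_r(x_0)}|u|\le Cr^2$ uniformly at $x_0\in\Gamma^0$, and then combine it with interior estimates for the continuous equation on balls $B_{d/2}(y)$ with $d=\dist(y,\Gamma)$. The paper's actual proof of Theorem~\ref{AB} is precisely such a two-case dichotomy: either $B_{\delta/2}(y)$ avoids $\Gamma^0$ and the estimate~\eqref{eq: 1} from Theorem~\ref{contcase} applies, or $B_{\delta/2}(y)$ meets $\Gamma^0$ at a nearest point $w$ with $d=|y-w|$, in which case one first bounds $\|Q_y(u,d/2)\|$ using Theorem~\ref{maintheorem} (quadratic growth at $w$) and then iterates~\eqref{eq: 1} for $r\le d/2$.

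Where the proposal diverges, and where it has a genuine gap, is in how quadratic growth at $\Gamma^0$ is obtained. You propose the classical normalization-and-blowup scheme: let $M_j:=\sup_{B_{r_j}(x_j)}|u|/r_j^2\to\infty$, rescale $u_j=u(x_j+r_j\cdot)/(M_jr_j^2)$, extract a harmonic blow-up $u_0$ which is a nonzero quadratic harmonic polynomial, and then "derive a contradiction from the free-boundary condition." The contradiction you sketch --- that the zero set of $u$ near a $\Gamma^0$ point should have positive density while a quadratic harmonic polynomial has a null zero set --- does not hold: at so-called branch points of the two-phase problem the coincidence set $\{u=0\}$ is thin, and \textbf{B} gives no lower density bound on it. This is exactly why the blow-up approach in \cite{LSE09} requires a full classification of global solutions rather than a simple measure-theoretic argument, and that classification is a substantial ingredient you neither supply nor cite. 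There is also an unaddressed doubling issue with the sup-normalized rescaling: without an estimate of the form $\sup_{B_{2r_j}}|u|\le C\sup_{B_{r_j}}|u|$, the sequence $u_j$ need not be locally bounded beyond $B_1$, so the compactness step is not automatic.

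The paper sidesteps both problems by replacing the sup-normalization with a monotonicity argument for the spherical $L^2$ projection: Lemma~\ref{normderivative} computes $\frac{d}{dr}\|Q_y(u,r)\|_{L^2(\partial B_1)}^2=\frac{2}{r}\int_{B_1}Q_y(u,r)\,\Delta u(rx+y)\,dx$, and Lemma~\ref{normdecay} shows (by a compactness argument whose contradictory term is $(g_1(y_0,0)-g_2(y_0,0))\int_{\{q>0\}}q\,dx>0$, directly exploiting \textbf{B}) that this derivative is strictly positive once $\|Q_y(u,r)\|$ exceeds a threshold. This forces a uniform bound on $Q_y(u,r)$ (Theorem~\ref{maintheorem}) without ever classifying blow-up limits, and from it quadratic growth follows through Lemmas~\ref{w2p} and~\ref{u-v}. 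If you want to stay with a blow-up route, you would need to import a global-solution classification (as in \cite{LSE09}, which in the paper's setting requires the stronger hypothesis $\inf\lambda_1>0$, $\inf(-\lambda_2)>0$); otherwise the monotonicity of $r\mapsto\|Q_y(u,r)\|^2$ is the missing lemma that makes the argument close.
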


									At points where the gradient does not vanish, the implicit function theorem yields that the free boundary is locally a $C^{1,\alpha}$ graph for any $0<\alpha<1$. The solution $u$ changes sign across the free boundary, hence it locally solves the equation $\Delta u =g_1(x,u)$ on the side where it is positive and $\Delta u =g_2(x,u)$ on the side where it is negative. If the coefficients $g_i$ are regular enough to provide $C^{1,1}$ solutions up to the boundary -- this is encoded in assumption \textbf{C}, see \S 4 -- then we obtain full $C^{1,1}$ regularity. 
								
								\begin{thm}\label{ABC}
	Suppose $g_1,g_2$ satisfy \textbf{A}, \textbf{B} and \textbf{C}. Let $u$ be a solution of \eqref{pro} and $0\in\Gamma^0$. Then $u \in C^{1,1}(B_{\rho_0}(0))$, for some $\rho_0>0$.
\end{thm}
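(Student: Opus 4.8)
The plan is to isolate the only genuinely new difficulty — free boundary points at which $\nabla u$ vanishes but which are limits of points of $\Gamma^1$ — and to dispatch it by the $L^2$-projection machinery behind Theorems \ref{contcase} and \ref{AB}, with assumption \textbf{C} controlling the regular part of $\Gamma$ and assumption \textbf{B} excluding the obstructing blow-up profiles. Write $\Sigma:=\overline{\Gamma^1}\setminus\Gamma^1$. Since assumption \textbf{A} forces $f\in L^\infty$, the Sobolev embedding gives $u\in C^{1,\alpha}_{\mathrm{loc}}$ for every $\alpha<1$, so $\nabla u$ is continuous, $\Gamma^1$ is relatively open in $\Gamma$, and hence $\Sigma=\overline{\Gamma^1}\cap\Gamma^0\subseteq\Gamma^0$. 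If $0\notin\Sigma$ the statement is immediate: then $0\in\Gamma^0\setminus\overline{\Gamma^1}$, a small ball around $0$ lies in $B_{1/2}(0)\setminus\overline{\Gamma^1}$, and Theorem \ref{AB} applies there. So assume from now on $0\in\Sigma$.

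First I would check that $u\in C^{1,1}_{\mathrm{loc}}(B_{1/2}(0)\setminus\Sigma)$. On the open set $\{u\neq 0\}$ the equation is $\Delta u=g_1(x,u)$ or $\Delta u=g_2(x,u)$, and a rescaled application of Theorem \ref{contcase} on balls compactly contained in $\{u\neq 0\}$ gives $u\in C^{1,1}_{\mathrm{loc}}(\{u\neq 0\})$. Theorem \ref{AB} gives $u\in C^{1,1}_{\mathrm{loc}}(B_{1/2}(0)\setminus\overline{\Gamma^1})$, which in particular covers $\Gamma^0\setminus\Sigma$. Finally, near any $y\in\Gamma^1$ the implicit function theorem, using $\nabla u(y)\neq 0$ and $u\in C^{1,\alpha}$, exhibits $\Gamma=\{u=0\}$ locally as a $C^{1,\alpha}$ graph across which $u$ changes sign; on the side $\{u>0\}$, $u$ solves $\Delta u=g_1(x,u)$ with $u=0$ on this $C^{1,\alpha}$ portion of $\Gamma$, so assumption \textbf{C} gives $u\in C^{1,1}$ up to $\Gamma$ from that side, and likewise from $\{u<0\}$; since $u\in C^1$ across $\Gamma$, the two one-sided Hessians — each bounded, though they may jump across $\Gamma$ — patch to $u\in C^{1,1}$ near $y$. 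Together with the trivial bound at interior points, these pieces exhaust $B_{1/2}(0)\setminus\Sigma$.

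It remains to produce $C$ and $\rho_0>0$ with
\[
\sup_{B_r(z)}\bigl|\,u-\nabla u(z)\cdot(\cdot-z)\,\bigr|\le Cr^2\qquad\text{for all }z\in\Gamma\cap B_{\rho_0}(0),\ r\le\rho_0.
\]
Indeed, for $x\in B_{\rho_0/4}(0)\cap\{u\neq 0\}$ with $d:=\mathrm{dist}(x,\Gamma)$ and $z$ a nearest free boundary point, comparing $u$ on $B_d(x)$ to the affine function $\nabla u(z)\cdot(\cdot-z)$ and invoking the rescaled interior estimate of Theorem \ref{contcase} yields $|D^2u(x)|\le C'$; since $\Gamma$ is null and $u\in W^{2,p}$, this gives $u\in C^{1,1}(B_{\rho_0/4}(0))$. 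I would prove the displayed growth by contradiction, in the style of the $L^2$-projection method of \cite{ALS13, FS, IM15, IM2} used for Theorem \ref{AB}. If it fails for every $\rho_0$, there are free boundary points $z_k\to 0$ — necessarily with $\mathrm{dist}(z_k,\Sigma)\to 0$ by the local bounds above, and recall $0\in\Sigma$ — and scales $r_k\to 0$ which, at the largest admissible scale, realize a normalized second-order excess $A_k\to\infty$. Rescaling $u$ at $z_k$ by the natural quadratic scaling and removing, at scale $r_k$, the $L^2$-projection of its Hessian onto the span of the Hessians of second-order homogeneous harmonic polynomials produces, since $\Delta u\in L^\infty$, a nonzero global solution $u_\infty$ of the frozen two-phase problem $\Delta u_\infty=g_1(0,0)\chi_{\{u_\infty>0\}}+g_2(0,0)\chi_{\{u_\infty<0\}}$, with at most quadratic growth and vanishing such projection at every scale. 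Since $g_1(0,0)-g_2(0,0)\ge\sigma_0>0$ by assumption \textbf{B}, a Liouville-type classification of such global solutions — a sign-changing harmonic quadratic is impossible, as it would force $g_1(0,0)=g_2(0,0)=0$, and the remaining profiles (half-space, one-phase polynomial) are pinned down to have vanishing second-order harmonic part — forces $u_\infty=0$, contradicting $A_k\to\infty$.

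The step I expect to be the main obstacle is controlling the contribution of the interface $\Gamma$ to the $L^2$-projections when $z\in\Sigma$, i.e.\ keeping the rescaling above meaningful while $\Gamma$ sweeps through the window. Near the $\Gamma^1$-part of $\Gamma$ one must apply assumption \textbf{C} at a scale adapted to the (small) size of $\nabla u$, so that after rescaling this part of $\Gamma$ appears as a uniformly controlled $C^{1,\alpha}$ graph and the boundary $C^{1,1}$-estimates hold with scale-invariant constants — this is precisely what \textbf{C} is for — while near the $\Gamma^0$-part there is no such graph and one must instead use that the jump in $\Delta u$ across $\Gamma$ is at least $\sigma_0$ (assumption \textbf{B}), which prevents cancellation in the interface term and is what ultimately kills the harmonic-quadratic blow-up. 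The remaining work — the precise selection of $(z_k,r_k)$, $W^{2,p}$ compactness of the rescalings, and the equivalences between failure of $C^{1,1}$, failure of the displayed quadratic growth, and unboundedness of the projections — is routine within this framework.
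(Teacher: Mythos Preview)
Your outline has a genuine gap in the blow-up step. After subtracting the harmonic projection $\Pi_{z_k}(u,r_k)$ from the quadratic rescaling $\tilde u_{r_k}$, the Laplacian of the bounded remainder $v_k$ is still $g_1(r_k\cdot+z_k,\,u)\chi_{\{u(r_k\cdot+z_k)>0\}}+g_2(\cdots)\chi_{\{u(r_k\cdot+z_k)<0\}}$; these indicator sets are governed by the sign of $u(r_k\cdot+z_k)$, which after quadratic rescaling is dominated by the blowing-up polynomial $\Pi_{z_k}(u,r_k)$, \emph{not} by $v_k$. Passing to the limit (this is exactly the computation in Lemma \ref{normdecay}) yields $\Delta v=g_1(0,0)\chi_{\{q>0\}}+g_2(0,0)\chi_{\{q<0\}}$ where $q\in\Pp_2$ is the limit of the \emph{normalized} projections. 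So your $u_\infty$ does not solve the frozen two-phase problem in its own sign sets, and the Liouville classification you invoke (``half-space, one-phase polynomial, sign-changing harmonic quadratic'') is not applicable to it. Nor is it clear why $u_\infty$ should be nonzero: Lemma \ref{w2p} only says the remainder is uniformly bounded, not bounded away from zero. If instead you normalize by $A_k\to\infty$, the limit is harmonic and the jump structure --- and with it any leverage from assumption \textbf{B} --- disappears.

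The paper's proof takes a shorter route that deliberately avoids classifying global solutions. The identity of Lemma \ref{normderivative},
\[
\frac{d}{dr}\|Q_y(u,r)\|_{L^2(\partial B_1)}^2=\frac{2}{r}\int_{B_1}Q_y(u,r)\,\Delta u(rx+y)\,dx,
\]
together with the one-step compactness of Lemma \ref{normdecay}, shows this derivative is strictly positive whenever $\|Q_y(u,r)\|$ is large and $|\nabla u(y)|<\theta r$; hence $\|Q_y(u,r)\|$ stays bounded for all $r\ge|\nabla u(y)|/\theta$ (Theorem \ref{maintheorem}). Assumption \textbf{C} is then used \emph{quantitatively}, not merely as qualitative boundary regularity: for $w\in\Gamma^1$ it supplies a $C^{1,1}$ bound exactly on $B_{|\nabla u(w)|/\theta_0}(w)$, which is precisely the window where Theorem \ref{maintheorem} stops applying. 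The proof of Theorem \ref{ABC} is then a four-case split on whether $d=\dist(y,\Gamma)$ vanishes and whether the nearest free-boundary point $w$ lies in $\Gamma^0$ or $\Gamma^1$; in the last case the two ranges of scales glue seamlessly to give \eqref{eq: ineq} for all $|z|\le r_0$, and one concludes via Lemmas \ref{u-v} and \ref{c11}. No blow-up beyond the single contradiction in Lemma \ref{normdecay}, and no global-solution classification, is needed.
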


									Equation \eqref{pro} with right-hand side of the form \eqref{jump} is a generalization of the well-studied two-phase membrane problem, where $g_i(x,u)=\lambda_i(x),\ i=1,2$. The $C^{1,1}$ regularity in the case when $\lambda_1\ge 0$, $\lambda_2 \le 0$ are two constants satisfying \textbf{B} was obtained by Uraltseva \cite{MR1906034} via the ACF monotonicity formula. Moreover, Shahgholian proved this result for Lipschitz coefficients which satisfy \textbf{B} \cite[Example 2]{S03}. If the coefficients are H\"{o}lder continuous, the ACF method does not directly apply and under the stronger assumption that $\inf \lambda_1>0$ and $\inf -\lambda_2>0$, Edquist, Lindgren, Shahgholian \cite{LSE09} obtained the $C^{1,1}$ regularity via an analysis of blow-up limits and a classification of global solutions (see also \cite[Remark 1.3]{LSE09}). Theorem \ref{ABC} improves and extends this result.   
									
									The difficulty in the case when $g_i$ depend also on $u$ is that if $v:=u+L$ for some linear function $L$, then $v$ is no longer a solution to the same equation, so one has to get around the lack of linear invariance. Our technique exploits that linear perturbations do not affect certain $L^2$ projections.   
									
									The proof of Theorem \ref{ABC} does not rely on classical monotonicity formulas or classification of global solutions. Rather, our method is based on an identity which provides monotonicity in $r$ of the square of the $L^2$ norm of the projection of $u$ onto the space of second order homogeneous harmonic polynomials on the sphere of radius $r$.

Theorems \ref{AB} \& \ref{ABC} deal with the case when $f$ has a jump discontinuity.  If $f$ has a removable discontinuity, \eqref{pro} has the structure
\begin{equation}\label{pro: no-sign}
\Delta u=g(x,u) \chi_{u\neq 0}.
\end{equation}
In this case, one may merge some observations in the proofs of the previous results with the method in \cite{ALS13} and prove the following theorem.   

\begin{thm}\label{thm: no-sign} If $g$ satisfies assumption \textbf{A}, then every solution of \eqref{pro: no-sign} is in $C^{1,1}(B_{1/2})$.
\end{thm}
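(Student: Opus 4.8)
\textbf{Proof proposal for Theorem \ref{thm: no-sign}.}

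The plan is to adapt the $L^2$-projection machinery already developed for Theorem \ref{contcase} to the set $\{u \neq 0\}$, where \eqref{pro: no-sign} coincides with the semilinear equation $\Delta u = g(x,u)$, and to control the contribution of the coincidence set $\{u = 0\}$ by the method of \cite{ALS13}. Fix a free boundary point, say $0 \in \partial\{u \neq 0\}$ (interior estimates away from the free boundary follow from Theorem \ref{contcase} applied on any ball contained in $\{u \neq 0\}$, together with standard $W^{2,p}$ bounds). As in the proof of Theorem \ref{contcase}, for each $r>0$ let $P_r$ denote the $L^2(B_r)$-projection of $D^2 u$ onto the finite-dimensional space of Hessians of second-order homogeneous harmonic polynomials, and set $\Pi_r u$ to be the corresponding projection of $u$ onto that polynomial space on $B_r$. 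The goal is to show $\sup_{0<r<1/2}\|P_r\|<\infty$, since combined with the $W^{2,p}$ bound this yields $u \in C^{1,1}$ near $0$ by the same compactness/iteration argument used for Theorem \ref{contcase}.

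The key steps, in order, would be: (i) Establish the basic energy/growth identity relating $\|\Pi_{2r} u - \text{(relevant harmonic part)}\|$ to $\|\Pi_r u\|$ plus an error governed by $\|\Delta u\|_{L^\infty(B_r)}$ and the Dini modulus of $g$ in $u$; here one writes $\Delta u = g(x,u)\chi_{u\neq 0}$, splits the forcing into its Newtonian potential (handled by assumption \textbf{A}, exactly as in Theorem \ref{contcase}) plus a correction supported where $u=0$, and on $\{u=0\}$ one has $D^2 u = 0$ a.e., so that term only enters through boundary layers. (ii) Following \cite{ALS13}, use the nonnegativity/sign structure of the projection and a monotonicity-type inequality for $r \mapsto \|\Pi_r u\|^2 / r^{d+4}$ (or the appropriate normalization) to propagate a bound from a fixed scale down to $r \to 0^+$; the removable-discontinuity structure is what makes this work, since there is no jump to destroy the monotonicity. (iii) Convert the uniform bound on the projections into a uniform bound on $\|P_r\|$ and hence into the $C^{1,1}$ estimate, using that linear functions lie in the kernel of the relevant projection so the argument is insensitive to the lack of linear invariance caused by the $u$-dependence of $g$ — this is the same device flagged in the discussion after Theorem \ref{ABC}.

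The main obstacle I expect is step (i)–(ii) interface: controlling the error coming from the set $\{u = 0\}$ near the free boundary. Unlike the two-phase jump case, $g(x,0)$ need not be bounded away from zero, so $u$ can be degenerate along $\{u=0\}$ and the free boundary can be wild; one cannot rely on a clean non-degeneracy statement. The resolution should be that for the projection estimate one only needs that $\chi_{\{u\neq 0\}}\, g(x,u)$, viewed as the right-hand side, has a Newtonian potential whose $C^{1,1}$ norm on $B_r$ grows no faster than the terms already present — and this follows by dominating $|g(x,u)\chi_{u\neq 0}| \le |g(x,u)|$ and invoking assumption \textbf{A} directly, since the cutoff only decreases the forcing pointwise. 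Thus the $\{u=0\}$ set contributes nothing extra to the potential bound, and the Dini-in-$u$ hypothesis is used exactly as in Theorem \ref{contcase} to close the iteration. Making the "merge" precise — i.e. checking that the \cite{ALS13} monotonicity argument tolerates a Dini (rather than Hölder or Lipschitz) modulus in the $u$-variable — is the one place where genuine care is required, and I would carry it out by tracking the modulus through the projection identity and summing the resulting Dini series, as in the proof of Theorem \ref{contcase}.
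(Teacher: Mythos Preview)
Your overall framework---projections, dyadic iteration, Dini summation---is the right one, but the proposed handling of the coincidence set $\{u=0\}$ contains a genuine error. You write that the contribution of $\{u=0\}$ is controlled because $|g(x,u)\chi_{\{u\neq 0\}}|\le |g(x,u)|$ pointwise, ``since the cutoff only decreases the forcing pointwise,'' and that one may therefore invoke assumption \textbf{A} directly. This is false: $C^{1,1}$ bounds on Newtonian potentials are \emph{not} monotone under pointwise domination of the right-hand side. Assumption \textbf{A} gives a $C^{1,1}$ potential for $x\mapsto g(x,t)$ at each \emph{fixed} $t$; it says nothing about $x\mapsto g(x,u(x))\chi_{\{u(x)\neq 0\}}$, where the cutoff multiplies by the characteristic function of an a priori arbitrary open set. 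Characteristic functions of rough sets produce potentials with unbounded Hessian, so the term $-g(rx+y,\cdot)\chi_{\Lambda_r}$ (with $\Lambda_r=\{u(r\cdot)=0\}$) is genuinely dangerous and cannot be absorbed by \textbf{A}.

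What the paper actually does---following \cite{ALS13}---is to exploit the observation you mention but do not use: $D^2u=0$ a.e.\ on $\{u=|\nabla u|=0\}$. This forces the density $\lambda_r=|\Lambda_r\cap B_1|$ to be small whenever the projection $Q_y(u,r)$ is large, yielding an inequality of the form
\[
\lambda_{r/2}^{1/2}\le \frac{\tilde C}{\|Q_y(u,r)\|_{L^2(\partial B_1)}}\,\lambda_r^{1/2}+\omega\bigl(r^2\log\tfrac{1}{r}\bigr),
\]
obtained via a decomposition $u(r\cdot+y)/r^2=Q_y(u,r)+h_r+w_r+z_r$ where $\Delta h_r=-g(rx+y,0)\chi_{\Lambda_r}$ is controlled in $L^2$ (not $L^\infty$) by $\lambda_r^{1/2}$, and the extra piece $\Delta z_r=(g(rx+y,u)-g(rx+y,0))\chi_{B_1\setminus\Lambda_r}$ is what absorbs the $u$-dependence through the Dini modulus. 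The exponential decay of $\lambda_{r/2^j}$ then makes the contribution of the coincidence set summable, which in turn bounds the projections. Your step (ii) alludes to a monotonicity of $\|\Pi_r u\|^2/r^{d+4}$; no such formula is used, and the actual mechanism is this density-decay feedback. Without it the iteration does not close.
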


Theorems \ref{contcase} - \ref{thm: no-sign} provide a comprehensive theory for the general semilinear Poisson equation where the free boundary theory is encoded in the regularity assumption of $f$ in the $u$-variable.

				\section{Technical tools} 
				
				Throughout the text, the right-hand side of \eqref{pro} is assumed to be bounded. Moreover, $\mathcal{P}_2$ denotes the space of second order homogeneous harmonic polynomials. A useful elementary fact is that all norms on $\mathcal{P}_2$ are equivalent.
				\begin{lem}\label{lem: normsequivalent}
					The space $\Pp_2$ is a finite dimensional linear space. Consequently, all norms on $\Pp_2$ are equivalent.
				\end{lem}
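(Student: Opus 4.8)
The plan is to prove the two assertions in sequence, since the second follows from the first by a standard functional-analytic fact. First I would establish that $\Pp_2$ is a finite-dimensional linear subspace of the space of all polynomials in $d$ variables. The set $\Pp_2$ is by definition the set of homogeneous polynomials $p$ of degree $2$ on $\RR^d$ satisfying $\Delta p = 0$; since both the degree-$2$ homogeneity condition and the condition $\Delta p = 0$ are linear in the coefficients of $p$, the set $\Pp_2$ is a linear subspace of the vector space $H_2$ of \emph{all} homogeneous polynomials of degree $2$. The latter has dimension $\binom{d+1}{2} = d(d+1)/2$ (one basis element $x_ix_j$ for each pair $i\le j$), which is finite, so $\Pp_2$ is finite-dimensional as a subspace of a finite-dimensional space. (If one wants the exact dimension, the Laplacian maps $H_2$ onto the constants $H_0$, so $\dim \Pp_2 = d(d+1)/2 - 1$, but this precise count is not needed for the statement.)

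For the second assertion, I would invoke the classical theorem that any two norms on a finite-dimensional real vector space are equivalent. The cleanest route is: fix a basis $q_1,\dots,q_m$ of $\Pp_2$, which induces a linear isomorphism $T:\RR^m \to \Pp_2$ sending coordinates to the corresponding linear combination. Given an arbitrary norm $\|\cdot\|$ on $\Pp_2$, the pullback $\|T\cdot\|$ is a norm on $\RR^m$, and one shows by a compactness argument that it is equivalent to the Euclidean norm on $\RR^m$: the function $v \mapsto \|Tv\|$ is continuous (by the triangle inequality and homogeneity) and strictly positive on the Euclidean unit sphere of $\RR^m$, which is compact, hence attains a positive minimum and a finite maximum there; homogeneity then extends these bounds to all of $\RR^m$. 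Since any two norms on $\Pp_2$ are each equivalent to this fixed Euclidean reference norm, they are equivalent to each other. I would cite this as a standard fact (e.g.\ from any functional analysis textbook) rather than reproving it in full.

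The main (and only mild) obstacle is purely expository: one must be slightly careful about what "$\Pp_2$ is a linear space" means — it is a subspace of polynomials under pointwise addition and scalar multiplication, and the point worth stating is that the defining constraints (harmonicity, homogeneity of a fixed degree) are linear, so the subspace structure and finite-dimensionality are immediate. No genuine difficulty arises. I would keep the proof to a few lines, noting the two ingredients — linearity of the Laplace operator together with finite-dimensionality of degree-$2$ homogeneous polynomials, and the standard equivalence-of-norms theorem in finite dimensions — and omit routine verifications.
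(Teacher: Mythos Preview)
Your proposal is correct and complete. The paper itself offers no proof of this lemma at all---it is stated as ``a useful elementary fact'' and left unproved---so your argument simply supplies the standard details (linearity of the defining constraints, finite-dimensionality of degree-$2$ homogeneous polynomials, and the equivalence-of-norms theorem in finite dimensions) that the authors considered routine.
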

				
				\noindent For $u \in W^{2,2}(B_1)$, $y \in B_1$ and $r \in (0, \dist(y, \partial B_1))$, $\Pi_y(u,r)$ is defined to be the $L^2$ projection operator on $\mathcal{P}_2$ given by 
				
				$$\inf_{h \in \mathcal{P}_2} \int_{B_1} \Big | D^2\frac{u(rx+y)}{r^2}-D^2h \Big|^2 dx=\int_{B_1} \Big |D^2\frac{u(rx+y)}{r^2}-D^2\Pi_y(u,r) \Big|^2 dx.$$ Calderon-Zygmund theory yields the following useful inequality for re-scalings of weak solutions of \eqref{pro}.
				
					\begin{lem} \label{w2p} 
						Let $u$ solve \eqref{pro}, $y \in B_{1/2}$, and $r\le 1/4$. Then for 
						\[\tu_r(x)=\frac{u(rx+y)-rx\cdot\nabla u(y)-u(y)}{r^2}\]
						it follows that for $1\le p<\infty$ and $0<\alpha<1$, 
						$$\|\tu_r - \Pi_y(u,r)\|_{W^{2,p}(B_1)} \le C(n,\|f\|_{L^\infty(B_1\times \mathbb{R})},\|u\|_{L^\infty(B_1)}, p),$$
						and
						$$\|\tu_r - \Pi_y(u,r)\|_{C^{1,\alpha}(B_1)} \le C(n,\|f\|_{L^\infty(B_1\times \mathbb{R})},\|u\|_{L^\infty(B_1)}, \alpha).$$
					\end{lem}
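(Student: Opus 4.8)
The plan is to combine the interior Calder\'on--Zygmund estimate for the Poisson equation with one quantitative fact: once we subtract from $u$ its first order Taylor polynomial at $y$ and the best $L^{2}$‑fitting element of $\mathcal{P}_{2}$, the remainder is \emph{quadratically} small on every ball $B_{\rho}(y)$, $\rho\le 1/4$, uniformly in $y\in B_{1/2}$. Write $v_{r}:=\tu_{r}-\Pi_{y}(u,r)$. Since $\tu_{r}$ is obtained from $u$ by subtracting an affine map and rescaling, and $\Pi_{y}(u,r)\in\mathcal{P}_{2}$ is harmonic, one has $\Delta v_{r}(x)=f(rx+y,u(rx+y))$; as $|rx+y|\le 7/8$ for $x\in B_{3/2}$, $y\in B_{1/2}$, $r\le 1/4$, this holds on $B_{3/2}$ with $\|\Delta v_{r}\|_{L^{\infty}(B_{3/2})}\le\|f\|_{L^{\infty}(B_{1}\times\mathbb{R})}$. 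The interior Calder\'on--Zygmund estimate then gives, for every $p$,
\[
\|v_{r}\|_{W^{2,p}(B_{1})}\le C(n,p)\bigl(\|f\|_{L^{\infty}(B_{1}\times\mathbb{R})}+\|v_{r}\|_{L^{p}(B_{5/4})}\bigr),
\]
and iterating it finitely many times, each step raising the exponent via $W^{2,q}\hookrightarrow L^{q^{*}}$ on a slightly smaller ball, reduces the $W^{2,p}$ bound — and, taking $p>n$ and using $W^{2,p}\hookrightarrow C^{1,\alpha}$, the $C^{1,\alpha}$ bound as well — to proving $\|v_{r}\|_{L^{2}(B_{3/2})}\le C(n,\|f\|_{L^{\infty}(B_{1}\times\mathbb{R})},\|u\|_{L^{\infty}(B_{1})})$.

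To obtain this $L^{2}$ bound I would pass to a Campanato quantity. Put $w(x):=u(x)-u(y)-(x-y)\cdot\nabla u(y)$, so that $w(y)=0$, $\nabla w(y)=0$, $\Delta w=f(\cdot,u)$, and for $\rho\le 1/4$ set
\[
A(\rho):=\inf_{q\in\mathcal{P}_{2}}\Bigl(|B_{\rho}(y)|^{-1}\!\int_{B_{\rho}(y)}|w(x)-q(x-y)|^{2}\,dx\Bigr)^{1/2}.
\]
Since $\tu_{r}(x)=w(rx+y)/r^{2}$, elements of $\mathcal{P}_{2}$ are $2$‑homogeneous, and all norms on $\mathcal{P}_{2}$ are equivalent (Lemma~\ref{lem: normsequivalent}), $\|v_{r}\|_{L^{2}(B_{2})}$ is controlled by $r^{-2}A(2r)$ when $2r\le 1/4$ (the macroscopic range $1/8<r\le 1/4$ is immediate, since there classical interior $W^{2,2}$ regularity of $u$ bounds $\tu_{r}$, $D^{2}\tu_{r}$, and hence $\Pi_{y}(u,r)$); thus it suffices to prove $A(\rho)\le C\rho^{2}$ for all $\rho\le 1/4$. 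The base case $A(1/4)\le C$ is immediate: $\Delta u\in L^{\infty}(B_{1})$ forces $u\in C^{1,\alpha}(\overline{B_{7/8}})$ with norm controlled by $n,\alpha,\|f\|_{L^{\infty}}$ and $\|u\|_{L^{\infty}}$, so $|w(x)|\le C|x-y|^{1+\alpha}$ near $y$.

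The crux is the one‑step improvement: there should be a dimensional $\tau\in(0,1)$ with $A(\tau\rho)\le C(n)\tau^{3}A(\rho)+C(n)\rho^{2}\|f\|_{L^{\infty}(B_{1}\times\mathbb{R})}$ for $\rho\le 1/4$. Granted this, fixing $\tau$ with $C(n)\tau\le 1/2$ and then $K=C(n,\|f\|_{L^{\infty}},\|u\|_{L^{\infty}})$ large lets the ansatz $A(\rho)\le K\rho^{2}$ propagate from $\rho=1/4$ down the scales $\tau^{k}/4$, and comparing averages on nested balls extends it to all $\rho\le 1/4$. To prove the one‑step bound, let $q_{\rho}\in\mathcal{P}_{2}$ realize $A(\rho)$, set $g:=w-q_{\rho}(\cdot-y)$ on $B_{\rho}(y)$ (so $g(y)=0$, $\nabla g(y)=0$, $|\Delta g|\le\|f\|_{L^{\infty}}$), and split $g=\phi+h$ on $B_{3\rho/4}(y)$ with $\Delta\phi=\Delta g$, $\phi=0$ on $\partial B_{3\rho/4}(y)$, and $h$ harmonic. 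The maximum principle and interior gradient estimates give $\|\phi\|_{L^{\infty}}\le C\rho^{2}\|f\|_{L^{\infty}}$ and $\|\nabla\phi\|_{L^{\infty}(B_{\rho/2}(y))}\le C\rho\|f\|_{L^{\infty}}$, hence $\|h\|_{L^{\infty}(B_{\rho/2}(y))}\le C(A(\rho)+\rho^{2}\|f\|_{L^{\infty}})$ and, as $g$ vanishes to first order at $y$, $|h(y)|\le C\rho^{2}\|f\|_{L^{\infty}}$, $|\nabla h(y)|\le C\rho\|f\|_{L^{\infty}}$. Let $q_{h}\in\mathcal{P}_{2}$ be the quadratic Taylor part of $h$ at $y$ (trace‑free Hessian, as $h$ is harmonic); interior derivative estimates for harmonic functions give the super‑quadratic decay $|h(x)-h(y)-(x-y)\cdot\nabla h(y)-q_{h}(x-y)|\le C\tau^{3}\|h\|_{L^{\infty}(B_{\rho/2}(y))}$ on $B_{\tau\rho}(y)$. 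Using $q_{\rho}+q_{h}\in\mathcal{P}_{2}$ as competitor at scale $\tau\rho$ and assembling these estimates on $B_{\tau\rho}(y)$ yields the claimed inequality.

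I expect this one‑step estimate to be the main obstacle: one must quantify simultaneously how close bounded‑data Poisson solutions are to harmonic functions and the cubic flatness of harmonic functions beyond their second order Taylor polynomial, and then verify that the geometric gain $\tau^{3}$ is not absorbed by the dimensional losses from the harmonic‑approximation and Calder\'on--Zygmund constants, i.e. that $\tau$ can in fact be chosen so as to close the iteration. Once $A(\rho)\le C\rho^{2}$ is established, translating back through $v_{r}$ and the $2$‑homogeneity of $\mathcal{P}_{2}$ meets the reduction of the second paragraph, and the $W^{2,p}(B_{1})$ and $C^{1,\alpha}(B_{1})$ bounds of the lemma follow.
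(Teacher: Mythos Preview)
Your approach is correct but takes a genuinely different route from the paper's. The paper's proof is short: it invokes the Calder\'on--Zygmund BMO estimate $\|D^2u\|_{BMO(B_{1/2})}\le C$ as a black box, which immediately gives $\int_{B_{3/2}}|D^2\tilde u_r-\overline{D^2\tilde u_r}|^{2}\le C$; after a trace correction the constant matrix $\overline{D^2\tilde u_r}-\tfrac{a}{n}I$ (with $a=\fint f$) is the Hessian of some element of $\mathcal P_2$, so the minimizing property of $\Pi$ yields $\|D^2(\tilde u_r-\Pi)\|_{L^2}\le C$, and then Poincar\'e plus $L^p$ elliptic estimates and Sobolev embedding finish. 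You instead build a Campanato iteration via harmonic approximation from scratch, splitting $g=w-q_\rho$ into a Poisson part $\phi$ and a harmonic part $h$ and extracting the $\tau^3$ gain from the third-order Taylor remainder of $h$. This is more self-contained (it avoids citing the BMO bound) but considerably longer, and in effect reproves the piece of Calder\'on--Zygmund theory the paper is content to quote. Your one-step estimate does close for $\tau$ small, so the concern you flag in your last paragraph is not a real obstacle.

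One point in your reduction needs more than you say. You claim ``$\|v_r\|_{L^2(B_2)}$ is controlled by $r^{-2}A(2r)$'' by $2$-homogeneity and equivalence of norms on $\mathcal P_2$, but $A$ measures distance to the $L^2(B_{2r}(y))$-best element of $\mathcal P_2$, while $v_r=\tilde u_r-\Pi_y(u,r)$ involves the $D^2$-$L^2(B_1)$-best element; norm equivalence on $\mathcal P_2$ alone does not compare these two minimizers. The fix is easy but not automatic: once $A(2r)\le Cr^2$ gives $\|\tilde u_r-q^*\|_{L^2(B_2)}\le C$ for the $L^2$-minimizer $q^*$, a first Calder\'on--Zygmund step gives $\|D^2(\tilde u_r-q^*)\|_{L^2(B_1)}\le C$, then the minimizing property of $\Pi_y(u,r)$ and norm equivalence on $\mathcal P_2$ yield $\|q^*-\Pi_y(u,r)\|\le C$, hence $\|v_r\|_{L^2}\le C$, and now your bootstrap runs.
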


					\begin{proof}
						By Calderon-Zygmund theory (e.g. \cite[Theorem 2.2]{ALS13}), $$\|D^2u\|_{BMO(B_{1/2})} \le C;$$ in particular, 
						$$\int_{B_{3/2}} |D^2 \tilde{u}_r - \overline{D^2 \tilde{u}_r}|^2 \le C,$$ where $\overline{D^2 \tilde{u}_r}$ is the average of $D^2 \tilde{u}_r$ on $B_{3/2}.$ Now let \[a=a(f, r, y)=\fint_{B_{3/2}} f(rx+y, u(rx+y))\,dx\] and note that this quantity is uniformly controlled by $\|f\|_{L^\infty(B_1\times \mathbb{R})}$; this fact, and the definition of $\Pi$ yields (note: $\text{trace}(\overline{D^2 u}-\frac{a}{n}Id)=0$), 
						\begin{equation} \label{bmo}
						\int_{B_{3/2}} |D^2 (\tilde{u}_r - \Pi_0(\tilde{u}_r, 3/2))|^2 \le \int_{B_{3/2}} |D^2 \tilde{u}_r - (\overline{D^2 u}-\frac{a}{n}Id)|^2 \le C_1.
						\end{equation}
						Two applications of Poincar\'e's inequality together with the above estimate implies 
						$$\|\tilde{u}_r -\Pi_y(u, r)- \overline{\nabla \tilde{u}_r} \cdot x- \overline{\tilde{u}_r}\|_{W^{2,2}(B_{3/2})} \le C_2,$$
						where the averages are taken over $B_{3/2}$. Elliptic theory (e.g. \cite[Theorem 9.1]{gilbarg:01}) yields that for any $1\le p<\infty$,
						$$\|\tilde{u}_r -\Pi_y(u, r)- \overline{\nabla \tilde{u}_r} \cdot x- \overline{\tilde{u}_r}\|_{W^{2,p}(B_{3/2})} \le C_3.$$
						Let $\phi:=\tilde{u}_r-\overline{\nabla \tilde{u}_r} \cdot x- \overline{\tilde{u}_r}$. We have that $\phi(0)=-\overline{\tilde{u}_r}$ and $\nabla \phi(0)=-\overline{\nabla \tilde{u}_r}$; however, by the  Sobolev embedding theorem, $\phi$ is $C^{1,\alpha}$ and thus $$|\phi(0)|+|\nabla \phi(0)| \le C_4$$ completing the proof of the $W^{2,p}$ estimate. The $C^{1,\alpha}$ estimate likewise follows from the Sobolev embedding theorem.   
					\end{proof}
				
				\noindent Our analysis requires several additional simple technical lemmas involving the projection operator. 
				
				\begin{lem}\label{lma: keyineq}
					For any $u \in W^{2,2}(B_1)$ and $s\in [1/2,1]$,
					\begin{equation}\label{keyineq}
					\|\Pi_0(u,s)-\Pi_0(u,1)\|_{L^2(B_1)} \leq C \|\Delta u\|_{L^2(B_1)},
					\end{equation}
					and
					\begin{equation}\label{keyineq1}
					\|\Pi_0(u,s)-\Pi_0(u,1)\|_{L^\infty(B_1)} \leq C \|\Delta u\|_{L^2(B_1)},
					\end{equation}
					for some constant $C=C(n)$.
				\end{lem}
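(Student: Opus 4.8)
The plan is to combine the linearity of the projection operator with a harmonic/inhomogeneous splitting of $u$. First I would make the operator explicit. Writing $v_r(x):=v(rx)/r^2$ we have $D^2v_r(x)=(D^2v)(rx)$, so by definition $D^2\Pi_0(v,r)$ is the $L^2(B_1)$-orthogonal projection of the map $x\mapsto(D^2v)(rx)$ onto the space of constant trace-free symmetric matrices; computing this projection and changing variables yields
\begin{equation*}
D^2\Pi_0(v,r)\;=\;\fint_{B_r}D^2v\,dx\;-\;\frac{1}{n}\Big(\fint_{B_r}\Delta v\,dx\Big)\,\mathrm{Id}.
\end{equation*}
In particular $v\mapsto\Pi_0(v,r)$ is linear, and since an element of $\mathcal{P}_2$ is determined by its (constant) Hessian, it suffices to bound $\big|D^2\Pi_0(u,s)-D^2\Pi_0(u,1)\big|$ and then appeal to Lemma \ref{lem: normsequivalent} (applied to the norms $p\mapsto\|D^2p\|_{L^2(B_1)}$, $\|\cdot\|_{L^2(B_1)}$ and $\|\cdot\|_{L^\infty(B_1)}$ on $\mathcal{P}_2$) to obtain both \eqref{keyineq} and \eqref{keyineq1}.

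Next I would write $u=h+w$, where $w\in W^{2,2}(B_1)\cap W^{1,2}_0(B_1)$ is the solution of $\Delta w=\Delta u$ in $B_1$ with zero boundary data and $h:=u-w$ is harmonic in $B_1$ (weakly, hence classically). The standard $W^{2,2}$ estimate for the Dirichlet problem on the ball gives $\|D^2w\|_{L^2(B_1)}\le C(n)\|\Delta u\|_{L^2(B_1)}$. The key observation is that the harmonic part contributes a radius-independent projection: each entry $\partial_i\partial_jh$ is harmonic and lies in $L^1(B_1)$, so the mean value property gives $\fint_{B_r}D^2h=D^2h(0)$ for every $r\in(0,1]$ (the endpoint $r=1$ by dominated convergence), while $\Delta h=0$; hence $D^2\Pi_0(h,r)=D^2h(0)$ for all such $r$, and therefore $\Pi_0(h,s)=\Pi_0(h,1)$. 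By linearity,
\begin{equation*}
\Pi_0(u,s)-\Pi_0(u,1)=\Pi_0(w,s)-\Pi_0(w,1).
\end{equation*}

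Finally I would estimate the right-hand side directly from the formula above. For $r\in[1/2,1]$ one has $|B_r|\ge|B_{1/2}|$, so by Cauchy--Schwarz, $\Delta w=\Delta u$, and the elliptic estimate,
\begin{equation*}
\big|D^2\Pi_0(w,r)\big|\;\le\;C(n)\Big(\fint_{B_r}|D^2w|+\fint_{B_r}|\Delta u|\Big)\;\le\;C(n)|B_{1/2}|^{-1/2}\Big(\|D^2w\|_{L^2(B_1)}+\|\Delta u\|_{L^2(B_1)}\Big)\;\le\;C(n)\|\Delta u\|_{L^2(B_1)}.
\end{equation*}
Combining this with the previous paragraph gives $\big|D^2(\Pi_0(u,s)-\Pi_0(u,1))\big|\le C(n)\|\Delta u\|_{L^2(B_1)}$, and \eqref{keyineq}--\eqref{keyineq1} follow. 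The only substantive ingredient is the radius-independence of $\Pi_0(h,\cdot)$ for harmonic $h$: this is what causes the (a priori uncontrolled) Hessian of $u$ to cancel in the difference, leaving only the piece governed by $\Delta u$. Everything else is routine, modulo minor technical care for the solvability of the auxiliary Dirichlet problem and for the mean value identity up to $r=1$.
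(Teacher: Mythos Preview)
Your proof is correct and follows essentially the same strategy as the paper's: split $u$ into a harmonic part plus an inhomogeneous part, observe that the projection of the harmonic part is independent of the radius, and then bound the projection of the inhomogeneous part by $\|\Delta u\|_{L^2}$. The only difference is in the choice of auxiliary function and the bookkeeping: the paper takes the inhomogeneous piece to be the Newtonian potential $v=(\Delta u)\chi_{B_1}*N$ and cites \cite{ALS13} for the projection and Calder\'on--Zygmund bounds, whereas you take the Dirichlet solution on $B_1$ and compute the projection explicitly as the trace-free average of $D^2u$ over $B_r$---this makes your argument slightly more self-contained (no Calder\'on--Zygmund theory, only the global $W^{2,2}$ estimate for the Dirichlet problem on a ball), but the underlying idea is identical.
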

				\begin{proof}
					Let $f=\Delta u$ and $v$ be the Newtonian potential of $f$, i.e.
					$$v(x)=\frac{1}{n(n-2)\omega_n}\int\limits_{\RR ^n} \frac{f(y)\chi_{B_1}(y)}{|x-y|^{n-2}}dx,$$
					where $\omega_n$ is the volume of the unit ball in $\mathbb{R}^n$.
					Since $u-v$ is harmonic, $$\Pi_0(u-v,s)=\Pi_0(u-v,1);$$ therefore 
					$$\Pi_0(u,s)-\Pi_0(u,1)=\Pi_0(v,s)-\Pi_0(v,1).$$ Invoking bounds on the projection (e.g. \cite[Lemma 3.2]{ALS13}) and Calderon-Zygmund theory (e.g. \cite[Theorem 2.2]{ALS13}), it follows that
					\begin{align*}
					\|\Pi_0(u,s)-\Pi_0(u,1)\|_{L^2(B_1)}&=\|\Pi_0(v,s)-\Pi_0(v,1)\|_{L^2(B_1)}\\ 
					&\leq C \|\Delta v\|_{L^2(B_1)}=C \|\Delta u\|_{L^2(B_1)}.
					\end{align*}
					The $L^{\infty}$ bound follows from the equivalence of the norms in the space $\Pp_2$.
				\end{proof}
				
				\begin{lem} \label{log}
					Let $u$ solve \eqref{pro}. Then for all $0< r \le 1/4,\ s\in [1/2,1]$ and $y \in B_{1/2}$, 
					$$\sup_{B_1} |\Pi_y(u,rs)-\Pi_y(u,r)| \le C,$$
					and
					\[\sup\limits_{B_1}|\Pi_y(u,r)|\leq C\log(1/r),\]
					for some constant $C=C(n,\|f\|_{L^\infty(B_1\times \mathbb{R})},\|u\|_{L^\infty(B_1)})$.
				\end{lem}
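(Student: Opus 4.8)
The plan is to reduce both assertions to the rescaled form of Lemma \ref{lma: keyineq}. The key observation is that $\Pi_y(u,\rho)$ depends only on the Hessian field $x\mapsto(D^2u)(\rho x+y)$, because adding an affine function to a competitor polynomial does not change its Hessian. Writing
\[\tu_r(x)=\frac{u(rx+y)-rx\cdot\nabla u(y)-u(y)}{r^2},\]
a direct change of variables (keeping track of the factors $r^2$ and $s^2$ in the definition of the projection) then gives $\Pi_y(u,rs)=\Pi_0(\tu_r,s)$ and $\Pi_y(u,r)=\Pi_0(\tu_r,1)$ for all $s\in(0,1]$ and $r\le 1/4$; note that $rx+y\in B_{3/4}$ for $x\in B_1$, so $\tu_r\in W^{2,2}(B_1)$. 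Since $\Delta\tu_r(x)=f(rx+y,u(rx+y))$, we have $\|\Delta\tu_r\|_{L^2(B_1)}\le |B_1|^{1/2}\|f\|_{L^\infty(B_1\times\RR)}$, so applying \eqref{keyineq1} to $\tu_r$ yields
\[\sup_{B_1}|\Pi_y(u,rs)-\Pi_y(u,r)|=\sup_{B_1}|\Pi_0(\tu_r,s)-\Pi_0(\tu_r,1)|\le C,\]
which is the first claim.

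For the logarithmic estimate I would first anchor the projection at scale $r=1/4$. Calderon--Zygmund theory gives $\|D^2u\|_{L^2(B_{3/4})}\le C(n,\|f\|_{L^\infty},\|u\|_{L^\infty})$ (this bound is already invoked in Lemma \ref{w2p}), hence $\|D^2\tu_{1/4}\|_{L^2(B_1)}\le C$; combining with the $W^{2,2}$ bound of Lemma \ref{w2p} gives $\|D^2\Pi_y(u,1/4)\|_{L^2(B_1)}\le C$. Since $h\mapsto\|D^2h\|_{L^2(B_1)}$ is a norm on $\Pp_2$, Lemma \ref{lem: normsequivalent} upgrades this to $\sup_{B_1}|\Pi_y(u,1/4)|\le C$.

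Then I would iterate the first estimate dyadically between scales $r$ and $1/4$. Given $0<r\le 1/4$, let $N$ be the least integer with $2^Nr\ge 1/4$, so that $N\le\log_2(1/r)$; set $\rho_j=2^jr$ for $0\le j\le N-1$ and $\rho_N=1/4$. Each consecutive ratio $\rho_j/\rho_{j+1}$ lies in $[1/2,1]$ and each $\rho_{j+1}\le 1/4$, so the first estimate applies to every pair, and a telescoping sum gives
\[\sup_{B_1}|\Pi_y(u,r)-\Pi_y(u,1/4)|\le\sum_{j=0}^{N-1}\sup_{B_1}|\Pi_y(u,\rho_j)-\Pi_y(u,\rho_{j+1})|\le CN\le C\log(1/r).\]
Adding the anchor bound finishes the proof.

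The only genuinely delicate point is the scaling identity $\Pi_y(u,rs)=\Pi_0(\tu_r,s)$; once that is established correctly, the rest is bookkeeping with universal constants and a finite geometric count of dyadic scales. One should double-check there that the subtracted affine part of $\tu_r$ is indeed harmless — it is, precisely because the projection operator only sees second derivatives — and that the $L^2$ norm of $\Delta\tu_r$ is scale-invariantly controlled by $\|f\|_{L^\infty}$, which it is since $\Delta$ and the $r^{-2}$ rescaling interact correctly.
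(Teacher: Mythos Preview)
Your proof is correct and follows essentially the same approach as the paper: the scaling identity $\Pi_y(u,rs)=\Pi_0(\tu_r,s)$ reduces the first estimate to Lemma~\ref{lma: keyineq}, and the second follows by dyadic telescoping. You are slightly more explicit than the paper in justifying the anchor bound $\sup_{B_1}|\Pi_y(u,1/4)|\le C$ via Calderon--Zygmund and Lemma~\ref{w2p}; the paper simply absorbs this term into the constant without comment.
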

				
				\begin{proof}
					Note that
					\[\Pi_y(u,rs)-\Pi_y(u,r)=\Pi_0(\tu_r,s)-\Pi_0(\tu_r,1),\]
					where
					\[\tu_r(x)=\frac{u(rx+y)-rx\cdot\nabla u(y)-u(y)}{r^2}\]
					as before. From Lemma \ref{lma: keyineq} we have that
					\[\|\Pi_0(\tu_r,s)-\Pi_0(\tu_r,1)\|_{L^{\infty}(B_1\times \mathbb{R})}\leq C \|\Delta \tu_r\|_{L^2(B_1)}\leq C \|f\|_{L^{\infty}(B_1)}.\]
					As for the second inequality in the statement of the lemma let $r_0=1/4$ and $s\in[1/2,1]$. Then we have that
					\begin{align}
					\sup\limits_{B_1}|\Pi_y(u,sr_0/2^j)| &\leq  \sup \limits_{B_1}|\Pi_y(u,sr_0/2^j)-\Pi_y(u,r_0/2^j)|
					\\
					&+\sum\limits_{k=0}^{j-1}\sup\limits_{B_1}|\Pi_y(u,r/2^{k+1})-\Pi_y(u,r/2^k)\\
					&+\sup\limits_{B_1}|\Pi_y(u,r_0)|\leq C j\leq C \log \left(\frac{2^j}{sr_0}\right),
					\end{align}
					for all $j\geq 1$.
				\end{proof}
				
				\noindent The previous tools imply a growth estimate on weak solutions solution of \eqref{pro}. 
				\begin{lem} \label{gro}
					Let $u$ solve \eqref{pro}. Then for $y \in B_{1/2}$ and $r>0$ small enough,
					$$
					\sup_{B_r(y)} |u(x)-u(y)-(x-y)\nabla u(y)| \le Cr^2 \log(1/r). 
					$$
				\end{lem}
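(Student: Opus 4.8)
The plan is to rescale and reduce the claim to the uniform logarithmic bound already established for the Hessian projection. Fix $y \in B_{1/2}$ and $r \le 1/4$, and set
$$\tu_r(x)=\frac{u(rx+y)-rx\cdot\nabla u(y)-u(y)}{r^2}$$
exactly as in Lemma \ref{w2p}. Writing an arbitrary point of $B_r(y)$ as $x = rz+y$ with $z\in B_1$, a direct change of variables gives
$$\sup_{B_r(y)}|u(x)-u(y)-(x-y)\cdot\nabla u(y)| = r^2 \sup_{B_1}|\tu_r|,$$
so it suffices to prove $\sup_{B_1}|\tu_r| \le C\log(1/r)$ with $C=C(n,\|f\|_{L^\infty(B_1\times\RR)},\|u\|_{L^\infty(B_1)})$.

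Next I would split $\tu_r = (\tu_r - \Pi_y(u,r)) + \Pi_y(u,r)$. Since $D^2\tu_r = D^2\big(u(rx+y)/r^2\big)$, the polynomial $\Pi_y(u,r)\in\Pp_2$ is precisely the projection appearing in the definition of $\Pi_y$, and Lemma \ref{w2p} (through its $C^{1,\alpha}$ estimate, equivalently the $W^{2,p}$ estimate plus Sobolev embedding) furnishes
$$\sup_{B_1}|\tu_r-\Pi_y(u,r)| \le \|\tu_r-\Pi_y(u,r)\|_{C^{1,\alpha}(B_1)} \le C$$
uniformly in $r\le 1/4$. On the other hand, Lemma \ref{log} yields $\sup_{B_1}|\Pi_y(u,r)| \le C\log(1/r)$. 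Adding the two bounds, and noting that for $r \le 1/4$ one has $\log(1/r) \ge \log 4 > 0$ so the additive constant is absorbed into the logarithmic term, gives $\sup_{B_1}|\tu_r| \le C\log(1/r)$. Undoing the rescaling completes the proof.

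There is no genuine obstacle here: the estimate is essentially bookkeeping on top of Lemmas \ref{w2p} and \ref{log}. The only point requiring a little care is to invoke the $C^{1,\alpha}$ (and not merely $W^{2,2}$) control of $\tu_r - \Pi_y(u,r)$, so that the zeroth- and first-order parts of $\tu_r$ — which are invisible to the Hessian projection $\Pi_y$ — are also controlled by the same constant; this is exactly why Lemma \ref{w2p} records the $C^{1,\alpha}$ bound alongside the $W^{2,p}$ one.
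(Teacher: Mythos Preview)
Your proof is correct and follows essentially the same approach as the paper: rescale to $\tu_r$, split as $(\tu_r-\Pi_y(u,r))+\Pi_y(u,r)$, apply the $C^{1,\alpha}$ bound from Lemma \ref{w2p} to the first piece and the logarithmic bound from Lemma \ref{log} to the second, then absorb the additive constant into the $\log(1/r)$ term for small $r$.
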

				\begin{proof}
					Let
					\[\tilde{u}_r=\frac{u(rx+y)-rx \cdot \nabla u(y)-u(y)}{r^2}.
					\]
					The assertion of the Lemma is equivalent to the estimate \[\|\tu_r\|_{L^{\infty}(B_1)}\leq C \log(1/r),\] for $r$ small enough.
					Lemma \ref{log} and the $C^{1,\alpha}$ estimates of Lemma \ref{w2p} imply
					\begin{align}
					\|\tu_r\|_{L^{\infty}(B_1)} &\leq \|\tu_r-\Pi_y(u,r)\|_{L^{\infty}(B_1)}+\|\Pi_y(u,r)\|_{L^{\infty}(B_1)}\\
					&\leq C+C\log(1/r)\leq C\log(1/r),
					\end{align}
					provided $r$ is small enough.
				\end{proof}
				\noindent  Next lemma relates the boundedness of the projection operator and the boundedness of second derivatives of weak solutions of \eqref{pro}.   
				\begin{lem} \label{c11}
					Let $u$ be a solution to \eqref{pro}. If for each $y \in B_{1/2}$ there is a sequence $r_j(y) \rightarrow 0^+$ as $j \rightarrow \infty$ such that $$M:=\sup_{y\in B_{1/2}}\sup_{j \in \mathbb{N}} \|D^2\Pi_y(u,r_j(y))\|_{L^\infty(B_{1/2})} < \infty,$$ then 
					$$|D^2 u| \le C \hskip .2in \text{a.e. in} \hskip .2 in B_{1/2},$$ for some constant $C=C(M,n,\|f\|_{L^\infty(B_1 \times \mathbb{R})},\|u\|_{L^{\infty}(B_1)})>0$. 
				\end{lem}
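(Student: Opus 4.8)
The plan is to compute $D^2\Pi_y(u,r)$ explicitly as a trace-free average of $D^2u$ over the ball $B_r(y)$, to observe that this matrix has a genuine limit as $r\to0^+$ at every Lebesgue point of $D^2u$, and then to read off the pointwise bound on $D^2u$ from the hypothesis.

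First I would record the linear-algebraic form of the projection. Since $\mathcal{P}_2$ is the space of homogeneous harmonic quadratics, $\{D^2h:h\in\mathcal{P}_2\}$ is exactly the set of constant, symmetric, trace-free matrices; in particular $D^2\Pi_y(u,r)$ is a constant matrix and $\|D^2\Pi_y(u,r)\|_{L^\infty(B_{1/2})}$ is just its norm. From the definition of $\Pi_y$, and because $D^2\big(u(rx+y)/r^2\big)=(D^2u)(rx+y)$, the matrix $D^2\Pi_y(u,r)$ is the trace-free symmetric matrix $A$ minimizing $\int_{B_1}\big|(D^2u)(rx+y)-A\big|^2\,dx$. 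Writing $(D^2u)(rx+y)$ as its average over $B_1$ plus a mean-zero fluctuation makes the cross term vanish, so the minimizer is the trace-free part of that average; after a change of variables,
\[
D^2\Pi_y(u,r)=\fint_{B_r(y)}D^2u\;-\;\frac1n\Big(\fint_{B_r(y)}\Delta u\Big)\iid .
\]

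Next I would pass to the limit. By Calderon-Zygmund theory $u\in W^{2,p}(B_{1/2})$ for every $p<\infty$ (this is already invoked in Lemma \ref{w2p}), so almost every $y\in B_{1/2}$ is an $L^1$-Lebesgue point of $D^2u$; at such a point $\fint_{B_r(y)}D^2u\to D^2u(y)$ and $\fint_{B_r(y)}\Delta u\to\Delta u(y)=f(y,u(y))$, hence
\[
D^2\Pi_y(u,r)\longrightarrow D^2u(y)-\frac{f(y,u(y))}{n}\,\iid\qquad\text{as }r\to0^+ .
\]
Evaluating this convergence along the sequence $r_j(y)\to0^+$ supplied by the hypothesis, and using $\|D^2\Pi_y(u,r_j(y))\|_{L^\infty(B_{1/2})}\le M$, gives $\big|D^2u(y)-\tfrac1n f(y,u(y))\,\iid\big|\le M$ for a.e.\ $y\in B_{1/2}$. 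Since $|f(y,u(y))|\le\|f\|_{L^\infty(B_1\times\mathbb{R})}$ and $|\iid|=C(n)$, this yields $|D^2u(y)|\le M+C(n)\|f\|_{L^\infty(B_1\times\mathbb{R})}$ almost everywhere in $B_{1/2}$, which is the assertion.

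The steps requiring care are the explicit identification of $D^2\Pi_y(u,r)$ — that is, the $L^2(B_1)$-orthogonality of the average/fluctuation splitting and the fact that the closest trace-free matrix to a symmetric $B$ is $B-\frac1n(\operatorname{tr}B)\iid$ — and the elementary observation that $L^1$-Lebesgue points of $D^2u$ fill out a set of full measure; neither is a real obstacle. For contrast, a growth-estimate approach through Lemmas \ref{w2p}--\ref{gro} seems to stall: bounding $\|D^2\Pi_y(u,r_j)\|$ does give $\|\tu_{r_j}\|_{L^\infty(B_1)}\le C(M)$ and hence quadratic growth of $u$ about $y$ at the scales $r_j(y)$, but upgrading to all small scales would need control of the ratios $r_j/r_{j+1}$, which is not available; passing through the pointwise limit of $D^2\Pi_y(u,\cdot)$ is therefore the natural route.
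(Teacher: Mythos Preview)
Your proof is correct and takes a route that differs in one key respect from the paper's. Both arguments localize at a Lebesgue point $y$ of $D^2u$ and compare $D^2u(y)$ to $D^2\Pi_y(u,r_j)$. The paper does this implicitly, via
\[
|D^2u(y)| \;=\; \lim_{j} \fint_{B_{r_j}(y)} |D^2u| \;\le\; \limsup_{j} \fint_{B_{r_j}(y)} |D^2u - D^2\Pi_y(u,r_j)| \;+\; M,
\]
and then bounds the remaining average by invoking Lemma~\ref{w2p}, whose proof rests on the BMO estimate for $D^2u$. You instead identify $D^2\Pi_y(u,r)$ explicitly as the trace-free part of $\fint_{B_r(y)}D^2u$ and pass to the limit directly, which bypasses Lemma~\ref{w2p} altogether and needs only that $D^2u\in L^1_{\mathrm{loc}}$. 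Your argument is therefore more self-contained and slightly more elementary, while the paper's version keeps Lemma~\ref{w2p} as the single workhorse estimate reused throughout the section. The explicit formula you derive for $D^2\Pi_y(u,r)$ is a pleasant byproduct that the paper does not record.
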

				
				\begin{proof}
					Let $y \in B_{1/2}$ be a Lebesgue point for $D^2 u$ and $r_j=r_j(y) \rightarrow 0^+$ as $j \rightarrow \infty$. Then by utilizing Lemma \ref{w2p}, 
					\begin{align*}
					|D^2 u(y)| &= \lim_{j\rightarrow \infty} \fint_{B_{r_j}(y)} |D^2u(z)|dz\\
					&\le \limsup_{j\rightarrow \infty} \fint_{B_{r_j}(y)} |D^2u(z)-D^2\Pi_y(u,r_j)|dz + M\\
					&\le C. 
					\end{align*}
					Since a.e. $z \in B_{1/2}$ is a Lebesgue point for $D^2 u$, the proof is complete.   
				\end{proof}
				
				Next, we introduce another projection that we need for our analysis. Define $Q_y(u,r)$ to be the minimizer of
				\[\inf\limits_{q \in \mathcal{P}_2} \int\limits_{\partial B_1} \left|\frac{u(rx+y)}{r^2}-q(x)\right|^2 d\mathcal{H}^{n-1}.
				\]
				The following lemma records the basic properties enjoyed by this projection, cf. \cite[Lemma 3.2]{ALS13}.
				\begin{lem}\label{Q}
					\begin{itemize}
						\item[i.] $Q_y(\cdot,r)$ is linear;
						\item[ii.] if $u$ is harmonic $Q_y(u,s)=Q_y(u,r)$ for all $s<r$;
						\item[iii.] if $u$ is a linear function then $Q_y(u,r)=0$;
						\item[iv.] if $u$ is a second order homogeneous polynomial then $Q_y(u,r)=u$;
						\item[v.] $\|Q_0(u,s)-Q_0(u,1)\|_{L^2(\partial B_1)}\leq C_s \|\Delta u\|_{L^2(B_1)}$, for $0<s<1$;
						\item[vi.] $\|Q_0(u,1)\|_{L^2(\partial B_1)} \leq \|u\|_{L^2(\partial B_1)}$.
					\end{itemize}
				\end{lem}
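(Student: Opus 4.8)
The plan is to view $Q_y(u,r)$ as nothing more than the $L^2(\partial B_1)$-orthogonal projection onto the finite dimensional subspace $\mathcal{P}_2$ of the rescaled function $v_{y,r}(x):=u(rx+y)/r^2$, and to read off (i), (iii), (iv), (vi) as essentially formal consequences. Indeed, $u\mapsto v_{y,r}$ is linear and orthogonal projection is linear, giving (i); an orthogonal projection never increases the $L^2(\partial B_1)$ norm and $v_{0,1}=u$ on $\partial B_1$, giving (vi); an affine function rescales to an affine function, whose restriction to $\partial B_1$ is a combination of spherical harmonics of degree $\le 1$ and hence $L^2(\partial B_1)$-orthogonal to $\mathcal{P}_2$ (whose elements restrict to degree $2$ spherical harmonics), giving (iii); and expanding $u(rx+y)$ for a homogeneous quadratic $u$ shows $v_{y,r}=u+(\text{affine})$, so by (iii) the projection returns the harmonic part of $u$, in particular $u$ itself when $u\in\mathcal{P}_2$, giving (iv).

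For (ii) I would invoke the homogeneous expansion of a harmonic function: if $u$ is harmonic on a neighborhood of $\overline{B_r(y)}$ then $u(x+y)=\sum_{k\ge 0}P_k(x)$ with $P_k$ a homogeneous harmonic polynomial of degree $k$, the series converging locally uniformly. Hence $v_{y,r}(x)=\sum_{k\ge 0}r^{k-2}P_k(x)$, and since the $P_k|_{\partial B_1}$ lie in the pairwise orthogonal eigenspaces of the spherical Laplacian, the $L^2(\partial B_1)$-projection onto the degree $2$ eigenspace $\mathcal{P}_2$ equals $P_2$ for every admissible radius; thus $Q_y(u,s)=P_2=Q_y(u,r)$ for $s<r$.

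The only item requiring real work is (v), and here I would mimic the proof of Lemma \ref{lma: keyineq}. Write $u=v+h$ on $B_1$, where $v$ is the Newtonian potential of $\Delta u\,\chi_{B_1}$, so that $\Delta v=\Delta u$ in $B_1$ and $h:=u-v$ is harmonic in $B_1$. By linearity (i) together with (ii) applied to $h$ one gets $Q_0(u,s)-Q_0(u,1)=Q_0(v,s)-Q_0(v,1)$, so it suffices to bound each of $Q_0(v,s)$ and $Q_0(v,1)$ in $L^2(\partial B_1)$ by $C_s\|\Delta u\|_{L^2(B_1)}$. Since orthogonal projection does not increase norm, $\|Q_0(v,t)\|_{L^2(\partial B_1)}\le \|v(t\cdot)/t^2\|_{L^2(\partial B_1)}=t^{-2-(n-1)/2}\|v\|_{L^2(\partial B_t)}$ for $t\in\{s,1\}$; the trace theorem bounds $\|v\|_{L^2(\partial B_t)}$ by $C_t\|v\|_{W^{1,2}(B_{3/2})}$, and because $v$ is the Newtonian potential of an $L^2(B_1)$ function, Young's inequality applied to the truncated Newtonian kernel controls $\|v\|_{L^2(B_2)}$ while interior Calderon--Zygmund estimates then give $\|v\|_{W^{2,2}(B_{3/2})}\le C\|\Delta u\|_{L^2(B_1)}$. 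Combining these with the triangle inequality yields (v) with a constant $C_s$ degenerating as $s\to 0^+$; the corresponding $L^\infty$-type bounds, if needed, follow from the equivalence of norms on $\mathcal{P}_2$ (Lemma \ref{lem: normsequivalent}).

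I expect (v) to be the main obstacle — not because any single estimate is hard, but because it is where all the hypotheses are actually used and where one must be slightly careful: $Q_0(\cdot,1)$ refers to boundary traces on $\partial B_1$, which is legitimate since $u\in W^{2,2}(B_1)$ and $v\in W^{2,2}_{\mathrm{loc}}(\RR^n)$, and the splitting $u=v+h$ together with (ii) is precisely what converts an $L^2$ datum on $B_1$ into an $L^2(\partial B_1)$ bound on a projection. Everything else is soft linear algebra on the finite dimensional space $\mathcal{P}_2$ together with the orthogonality of spherical harmonics of distinct degrees.
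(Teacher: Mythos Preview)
Your proposal is correct and follows essentially the same route as the paper. Items (i), (iii), (iv), (vi) are treated in both as formal consequences of $Q_y(u,r)$ being the $L^2(\partial B_1)$-orthogonal projection of $u(r\,\cdot+y)/r^2$ onto $\mathcal{P}_2$; for (ii) the paper likewise expands the harmonic function in homogeneous harmonic polynomials and reads off that only the degree-$2$ coefficient survives the projection at every radius; and for (v) the paper indicates ``similar to Lemma~\ref{lma: keyineq}'', i.e.\ exactly your splitting $u=v+h$ with $v$ the Newtonian potential of $\Delta u\,\chi_{B_1}$, reduction via (i)--(ii) to $Q_0(v,s)-Q_0(v,1)$, and then a Calder\'on--Zygmund bound on $v$, which your Young-plus-trace argument makes explicit.
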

				\begin{proof}
					\begin{itemize}
						\item[i.] This is evident.
						\item[ii.] It suffices to prove $Q_y(u,r)=Q_y(u,1)$ for $r<1$. Let $$\sigma_2=\frac{Q_y(u,1)}{\|Q_y(u,1)\|_{L^2(\partial B_1)}}$$ and for $i \neq 2$, let $\sigma_i $ be an $i^{th}$ degree harmonic polynomial. Then there exist coefficients $a_i$ such that
						\[u(x+y)=\sum\limits_{i=0}^{\infty} a_i \sigma_i(x),\quad x\in \partial B_1;
						\]
						in particular, $a_2=\|Q_y(u,1)\|$.
						Let
						\[v(x)=\sum\limits_{i=0}^{\infty} a_i \sigma_i(x),\quad x\in B_1.
						\]
						Then $v$ is a harmonic and $u(x+y)=v(x)$ for $x \in \partial B_1$. Hence, we have that $u(x+y)=v(x)$ for $x \in B_1$ and in particular
						\[u(x+y)=\sum\limits_{i=0}^{\infty} a_i \sigma_i(x),\quad x\in B_1.
						\]
						Therefore
						\[\frac{u(rx+y)}{r^2}=\sum\limits_{i=0}^{\infty} a_i \frac{\sigma_i(rx)}{r^2}=\sum\limits_{i=0}^{\infty} a_i r^{i-2} \sigma_i(x),\quad x\in B_1,
						\]
						so $Q_y(u,r)=a_2\sigma_2(x)=Q_y(u,1)$.
						\item[iii. \& iv.] These are evident.
						\item[v.] Similar to Lemma \ref{keyineq}.
						\item[vi.] This follows from the fact that $Q_0(u,1)$ is the $L^2$ projection of $u$.
					\end{itemize}
				\end{proof}
				
				Next we prove some technical results for $Q_y(u,r)$ and establish a precise connection between $\Pi_y(u,r)$ and $Q_y(u,r)$ by showing that the difference is uniformly bounded in $r$.

				\begin{lem}\label{Qderivative}
					For $u \in W^{2,p}(B_1(y))$ with $p$ large enough and $r \in (0,1]$,
					\begin{equation}\label{eq: Qderivative}
					\frac{d}{dr}Q_y(u,r)=\frac{1}{r}Q_0(x\cdot \nabla u(x+y)-2u(x+y),r).
					\end{equation}
				\end{lem}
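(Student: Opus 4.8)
The plan is to exhibit $Q_y(u,\cdot)$ as a fixed bounded linear projection applied to an $r$-dependent rescaling of $u$, and then to differentiate under the projection. First I would reduce to $y=0$: writing $w(x):=u(x+y)$, one has $\nabla w(x)=\nabla u(x+y)$ and $Q_y(u,r)=Q_0(w,r)$ by definition, so it suffices to prove $\frac{d}{dr}Q_0(w,r)=\frac1r Q_0\big(x\cdot\nabla w(x)-2w(x),r\big)$.

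Next, let $P\colon L^2(\partial B_1)\to\mathcal P_2$ be the $L^2(\partial B_1)$-orthogonal projection onto $\mathcal P_2$; by Lemma~\ref{lem: normsequivalent} the target is finite-dimensional, so $P$ is a bounded linear operator, and by definition $Q_0(w,r)=P(g_r)$ where $g_r(x):=w(rx)/r^2$ for $x\in\partial B_1$. Because $p$ is large enough that $W^{2,p}(B_1)\hookrightarrow C^1(\overline{B_1})$ (Sobolev embedding, $p>n$), each $g_r$ is a bona fide continuous function on $\partial B_1$, and differentiating in $r$ pointwise gives
\[
\frac{d}{dr}g_r(x)=\frac{x\cdot\nabla w(rx)}{r^2}-\frac{2w(rx)}{r^3}=\frac1r\,\frac{v(rx)}{r^2},\qquad v(z):=z\cdot\nabla w(z)-2w(z);
\]
note that $x\mapsto v(rx)/r^2$ is precisely the function whose $L^2(\partial B_1)$-projection defines $Q_0(v,r)$.

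The key point is to promote this pointwise identity to the statement that $r\mapsto g_r$ is differentiable as a curve in $L^2(\partial B_1)$ with derivative $\frac1r (v)_r$, where $(v)_r(x):=v(rx)/r^2$. For fixed $r\in(0,1]$ and small $h$, I would write $\frac{g_{r+h}(x)-g_r(x)}{h}-\frac1r(v)_r(x)$, via the fundamental theorem of calculus, as an average of $x\cdot\nabla w$ along the segment joining $rx$ to $(r+h)x$ minus its value at $rx$, plus lower-order terms; the uniform continuity of $\nabla w$ on the compact set $\overline{B_1}$ — supplied by $W^{2,p}\hookrightarrow C^1$ — then forces this difference to $0$ uniformly on $\partial B_1$, hence, $\partial B_1$ having finite measure, in $L^2(\partial B_1)$. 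This equicontinuity estimate is the only place the hypothesis that $p$ is large is used, and it is the main, if routine, obstacle. Granting it, the boundedness and linearity of $P$ give
\[
\frac{d}{dr}Q_0(w,r)=\frac{d}{dr}P(g_r)=P\Big(\frac{d}{dr}g_r\Big)=P\Big(\frac1r(v)_r\Big)=\frac1r P\big((v)_r\big)=\frac1r Q_0(v,r),
\]
which is the claimed formula. (Equivalently, one may fix an $L^2(\partial B_1)$-orthonormal basis $\{p_k\}$ of $\mathcal P_2$ and differentiate each coefficient $c_k(r)=\int_{\partial B_1}g_r\,p_k\,d\mathcal H^{n-1}$ under the integral sign, the justification being the same estimate.)
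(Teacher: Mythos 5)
Your proof is correct and follows essentially the same route as the paper: represent $Q_y(u,r)$ as a fixed bounded linear projection applied to the $r$-dependent rescaling $x\mapsto u(rx+y)/r^2$, use the Sobolev embedding (yielding $C^1$ regularity for $p$ large) to differentiate this rescaling in $r$, and pass the derivative through the projection by linearity and boundedness. You merely spell out the interchange of $d/dr$ with the projection operator in more detail than the paper, which simply cites that $Q$ is a bounded linear operator and $u$ is $C^{1,\alpha}$.
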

				\begin{proof}
					Firstly, \[Q_y(u,r)=Q_0\left(\frac{u(rx+y)}{r^2},1 \right).\]
					Since $u$ is $C^{1,\alpha}$ if $p$ large enough and $Q$ is linear bounded operator, it follows that
					\begin{align}
					\frac{d}{dr}Q_y(u,r)&=Q_0\left(\frac{d}{dr}\frac{u(rx+y)}{r^2},1\right)=Q_0\left(\frac{rx\cdot \nabla u(rx+y)-2u(rx+y)}{r^3},1\right)\\
					&=\frac{1}{r}Q_0(x\cdot \nabla u(x+y)-2u(x+y),r).
					\end{align}
				\end{proof}
				\begin{lem}\label{lem: intbyparts}
					Let $u \in W^{2,p}(B_1(y))$ with $p$ large enough and $q \in \Pp_2$. Then
					\begin{equation}\label{intbyparts}
					\int \limits_{B_1} q(x) \Delta u(x+y) dx = \int\limits_{\partial B_1} q(x) \left(x\cdot \nabla u(x+y)-2u(x+y)\right)d \mathcal{H}^{n-1}.
					\end{equation}
				\end{lem}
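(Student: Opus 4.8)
The plan is to recognize \eqref{intbyparts} as Green's second identity specialized to a harmonic, $2$-homogeneous polynomial. Set $v(x):=u(x+y)$, so that $v\in W^{2,p}(B_1)$ for $p$ large, hence $v\in C^{1,\alpha}(\overline{B_1})$ by the Sobolev embedding theorem; in particular both integrals in \eqref{intbyparts} are well defined, the boundary integrand being a continuous function on $\partial B_1$. First I would record Green's second identity
\[
\int_{B_1}\big(q\,\Delta v-v\,\Delta q\big)\,dx=\int_{\partial B_1}\big(q\,\partial_\nu v-v\,\partial_\nu q\big)\,d\mathcal{H}^{n-1},
\]
which is classical for smooth $q$ and smooth $v$, and which extends to $v\in W^{2,p}(B_1)$ with $p>n$ by approximating $v$ in $W^{2,p}(B_1)$ by smooth functions: the left-hand side is continuous under $W^{2,p}$-convergence, and the right-hand side is continuous because $W^{2,p}(B_1)\hookrightarrow C^{1}(\overline{B_1})$ controls the traces of $v$ and $\nabla v$ on $\partial B_1$.

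Next I would exploit the two special features of $q\in\Pp_2$. Since $q$ is harmonic, $\Delta q\equiv 0$, so the second term on the left drops out. On $\partial B_1$ the outward unit normal is $\nu=x$, whence $\partial_\nu v=x\cdot\nabla v(x)=x\cdot\nabla u(x+y)$ and $\partial_\nu q=x\cdot\nabla q(x)$. Because $q$ is homogeneous of degree $2$, Euler's identity gives $x\cdot\nabla q(x)=2q(x)$ for every $x$, in particular for $x\in\partial B_1$. Substituting these identities into Green's formula yields
\[
\int_{B_1}q(x)\,\Delta u(x+y)\,dx=\int_{\partial B_1}q(x)\big(x\cdot\nabla u(x+y)-2u(x+y)\big)\,d\mathcal{H}^{n-1},
\]
which is precisely \eqref{intbyparts}.

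I do not expect any real obstacle: the content is entirely the combination of Green's identity, the harmonicity of $q$, and Euler's homogeneity relation. The only point deserving a line of care is the validity of Green's identity at the stated (non-smooth) regularity of $u$, which is disposed of by the density argument above; the hypothesis "$p$ large enough" is exactly what makes $\nabla u$ have a continuous trace on $\partial B_1$ so that the right-hand side makes classical sense.
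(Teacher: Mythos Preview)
Your proof is correct and follows essentially the same approach as the paper: Green's second identity, $\Delta q=0$, and Euler's homogeneity relation $x\cdot\nabla q=2q$ on $\partial B_1$. Your added density argument justifying Green's identity for $v\in W^{2,p}$ is a harmless (and welcome) bit of extra care that the paper leaves implicit.
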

				\begin{proof}
					Integration by parts implies
					\[\int \limits_{B_1} q(x) \Delta u(x+y) dx = \int \limits_{B_1} \Delta q(x) u(x+y) dx +\int\limits_{\partial B_1} q(x) \frac{\partial u(x+y)}{\partial n}-u(x+y) \frac{\partial q(x)}{\partial n}d \mathcal{H}^{n-1}.
					\]
					By taking into account that $q$ is a second order homogeneous polynomial it follows that 
					\[\frac{\partial q(x)}{\partial n}=2q(x),\quad x \in \partial B_1.
					\]
					Moreover,
					\[\frac{\partial u(x+y)}{\partial n}=x\cdot \nabla u(x+y),\quad x \in \partial B_1.
					\]
					Combining these equations yields \eqref{intbyparts}.
				\end{proof}
				
				\begin{lem}\label{keylemma}
					Let $u \in W^{2,p}(B_1(y))$ with $p$ large enough and $0<r\leq 1$. Then for every $q\in \Pp_2$, 
					\begin{equation}\label{lemma1}
					\int\limits_{\partial B_1} q(x) \frac{d}{dr} Q_y(u,r)(x) d\mathcal{H}^{n-1}=\frac{1}{r}\int \limits_{B_1} q(x) \Delta u(rx+y) dx.
					\end{equation}
				\end{lem}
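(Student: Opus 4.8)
The plan is to combine Lemma \ref{Qderivative} with a rescaled form of Lemma \ref{lem: intbyparts}; the only genuinely new ingredient is the elementary fact that pairing an $L^2(\partial B_1)$ orthogonal projection onto $\Pp_2$ against an element of $\Pp_2$ returns the pairing with the original function. Throughout, the hypothesis $u\in W^{2,p}(B_1(y))$ with $p$ large is used only to guarantee that $u$ is $C^{1,\alpha}$ up to $\partial B_1$, so that all the traces, the derivative formula of Lemma \ref{Qderivative}, and the integration by parts of Lemma \ref{lem: intbyparts} are legitimate.

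First I would invoke Lemma \ref{Qderivative} to write
$$\frac{d}{dr}Q_y(u,r)=\frac1r\,Q_0(g,r),\qquad g(x):=x\cdot\nabla u(x+y)-2u(x+y).$$
By the very definition of the projection $Q_0$, the polynomial $Q_0(g,r)$ is the $L^2(\partial B_1)$ projection of the function $x\mapsto g(rx)/r^2$ onto $\Pp_2$. Hence, by the minimality characterizing this projection (equivalently, the self-adjointness used for Lemma \ref{Q}(vi)), for every $q\in\Pp_2$ one has
$$\int_{\partial B_1}q(x)\,Q_0(g,r)(x)\,d\mathcal{H}^{n-1}=\int_{\partial B_1}q(x)\,\frac{g(rx)}{r^2}\,d\mathcal{H}^{n-1}=\frac{1}{r^{2}}\int_{\partial B_1}q(x)\bigl(rx\cdot\nabla u(rx+y)-2u(rx+y)\bigr)\,d\mathcal{H}^{n-1},$$
where I used $g(rx)=(rx)\cdot\nabla u(rx+y)-2u(rx+y)$. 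Multiplying by $1/r$ expresses the left-hand side of \eqref{lemma1} as $\tfrac{1}{r^{3}}$ times the boundary integral of $q(x)\bigl(rx\cdot\nabla u(rx+y)-2u(rx+y)\bigr)$.

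Next I would turn that boundary integral into the desired solid integral. Apply Lemma \ref{lem: intbyparts} not to $u$ but to the rescaled function $v(x):=u(rx+y)$, which belongs to $W^{2,p}(B_1)$ for $r\le 1$, taking the base point to be $0$. Since $\nabla v(x)=r\,\nabla u(rx+y)$ and $\Delta v(x)=r^{2}\,\Delta u(rx+y)$, this gives
$$r^{2}\int_{B_1}q(x)\,\Delta u(rx+y)\,dx=\int_{\partial B_1}q(x)\bigl(rx\cdot\nabla u(rx+y)-2u(rx+y)\bigr)\,d\mathcal{H}^{n-1}.$$
Substituting this identity into the expression obtained in the previous paragraph collapses the powers of $r$ to give exactly $\tfrac1r\int_{B_1}q(x)\Delta u(rx+y)\,dx$, which is \eqref{lemma1}.

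I do not anticipate a serious obstacle: once Lemmas \ref{Qderivative} and \ref{lem: intbyparts} are available the argument is bookkeeping, and the only point requiring a moment's care is the orthogonal-projection identity $\int_{\partial B_1}q\,Q_0(g,r)=\int_{\partial B_1}q\,\bigl(g(r\,\cdot)/r^{2}\bigr)$ for $q\in\Pp_2$, together with keeping track of the factors of $r$ produced by the change of variables. As an alternative that bypasses Lemma \ref{Qderivative}, one may set $F(r):=\int_{\partial B_1}q(x)\,Q_y(u,r)(x)\,d\mathcal{H}^{n-1}=\int_{\partial B_1}q(x)\tfrac{u(rx+y)}{r^{2}}\,d\mathcal{H}^{n-1}$ (using the projection identity once more), differentiate under the integral sign in $r\in(0,1]$, and then apply the rescaled Lemma \ref{lem: intbyparts}; this reproduces \eqref{lemma1} with essentially the same effort.
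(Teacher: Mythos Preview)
Your proof is correct and follows essentially the same route as the paper: invoke Lemma~\ref{Qderivative}, use that pairing against $q\in\Pp_2$ kills the projection, and then apply Lemma~\ref{lem: intbyparts} to the rescaled function. The only cosmetic difference is that the paper works with $\tu_r(x)=u(rx+y)/r^2$ (so that $\Delta\tu_r(x)=\Delta u(rx+y)$ and no stray powers of $r$ appear), whereas you use $v(x)=u(rx+y)$ and track the extra $r^2$ by hand; the arguments are otherwise identical.
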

				\begin{proof}Let $\tu_r(x)=u(rx+y)/r^2$.
					From Lemmas \ref{Qderivative} and \ref{lem: intbyparts} we obtain
					\begin{align*}
					\int\limits_{\partial B_1} q(x) \frac{d}{dr} Q_y(u,r)(x) d\mathcal{H}^{n-1}&=\frac{1}{r}\int\limits_{\partial B_1} q(x) Q_0\left(\frac{rx\cdot \nabla u(rx+y)-2u(rx+y)}{r^2},1\right) d\mathcal{H}^{n-1}\\
					&=\frac{1}{r}\int\limits_{\partial B_1} q(x) Q_0\left(x\cdot \nabla \tu_r(x)-2\tu_r(x),1\right) d\mathcal{H}^{n-1}\\
					&=\frac{1}{r}\int\limits_{\partial B_1} q(x) \left(x\cdot \nabla \tu_r(x)-2\tu_r(x)\right) d\mathcal{H}^{n-1}\\
					&=\frac{1}{r}\int\limits_{B_1} q(x) \Delta \tu_r(x) dx=\frac{1}{r}\int \limits_{B_1} q(x) \Delta u(rx+y) dx.
					\end{align*}
				\end{proof}
				
				\begin{lem}\label{normderivative}
					For $u \in W^{2,p}(B_1(y))$ with $p$ large enough and $0<r\leq 1$, 
					\begin{equation}\label{eq: normderivative}
					\frac{d}{dr} \int\limits_{\partial B_1} Q_y^2(u,r) d \HH^{n-1}=\frac{2}{r}\int\limits_{B_1} Q_y(u,r) \Delta u(rx+y) d x.
					\end{equation}
				\end{lem}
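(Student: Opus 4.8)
The plan is to differentiate the square under the integral sign and then recognize the resulting boundary integral as an instance of Lemma \ref{keylemma}. Concretely, since $r\mapsto Q_y(u,r)$ takes values in the finite-dimensional space $\Pp_2$ and is differentiable on $(0,1]$ by Lemma \ref{Qderivative} (with $\tfrac{d}{dr}Q_y(u,r)\in\Pp_2$ as well, because $\Pp_2$ is a linear subspace, cf. Lemma \ref{lem: normsequivalent}), the map $r\mapsto \int_{\partial B_1} Q_y^2(u,r)\,d\HH^{n-1}$ is differentiable and the product rule gives
\begin{equation}
\frac{d}{dr}\int_{\partial B_1} Q_y^2(u,r)\,d\HH^{n-1}
= 2\int_{\partial B_1} Q_y(u,r)(x)\,\frac{d}{dr}Q_y(u,r)(x)\,d\HH^{n-1}.
\end{equation}

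Next I would apply Lemma \ref{keylemma} with the specific test function $q:=Q_y(u,r)\in\Pp_2$ (for the fixed value of $r$ at which we are differentiating). That lemma yields
\begin{equation}
\int_{\partial B_1} Q_y(u,r)(x)\,\frac{d}{dr}Q_y(u,r)(x)\,d\HH^{n-1}
= \frac{1}{r}\int_{B_1} Q_y(u,r)(x)\,\Delta u(rx+y)\,dx,
\end{equation}
and substituting this into the previous identity produces exactly \eqref{eq: normderivative}.

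The only points needing care are the regularity/measure-theoretic justifications: that $u\in W^{2,p}(B_1(y))$ with $p$ large makes $u$ and hence $r\mapsto u(r\cdot+y)/r^2$ smooth enough (e.g. $C^{1,\alpha}$) for the differentiation in Lemma \ref{Qderivative} to be legitimate, and that the finite dimensionality of $\Pp_2$ lets us move $\tfrac{d}{dr}$ past the orthogonal projection and past the integral over $\partial B_1$ without any convergence issues. These are already handled by the hypotheses and by Lemmas \ref{Qderivative}--\ref{keylemma}, so the argument is essentially a one-line combination once those are in place; there is no genuine obstacle, only bookkeeping. I would therefore present the proof as: (i) product rule, (ii) $\tfrac{d}{dr}Q_y(u,r)\in\Pp_2$, (iii) invoke Lemma \ref{keylemma} with $q=Q_y(u,r)$.
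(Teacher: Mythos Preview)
Your proof is correct and follows exactly the same route as the paper: differentiate under the integral using the product rule (justified via Lemma \ref{Qderivative}), then apply Lemma \ref{keylemma} with $q=Q_y(u,r)\in\Pp_2$. There is nothing to add.
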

				\begin{proof}
					By Lemmas \ref{Qderivative}, \ref{keylemma} we get
					\begin{align}
					\frac{d}{dr} \int\limits_{\partial B_1} Q_y^2(u,r) d \HH^{n-1}&=2 \int\limits_{\partial B_1} Q_y(u,r) \frac{d}{dr} Q_y(u,r) d \HH^{n-1}\\
					&=\frac{2}{r}\int\limits_{B_1} Q_y(u,r) \Delta u(rx+y) d x.
					\end{align}
				\end{proof}

				\begin{lem}\label{u-v} Let $u$ be a solution of \eqref{pro} and $y \in B_{1/2}$. For $0<r<1/2$ consider \[u_r(x):=\frac{u(rx+y)-rx\cdot \nabla u(y)-u(y)}{r^2}-\Pi_y(u,r),\]
					
					\[v_r(x):=\frac{u(rx+y)-rx\cdot \nabla u(y)-u(y)}{r^2}-Q_y(u,r).
					\]
					Then
					\begin{itemize}
						\item[i.] $u_r-v_r$ is bounded in $C^{\infty}$, uniformly in $r$;
						\item[ii.] the family $\{v_r\}$ is bounded in $C^{1,\alpha}(B_1)\cap W^{2,p}(B_1)$, for every $0<\alpha<1$ and $p>1$.
					\end{itemize}
				\end{lem}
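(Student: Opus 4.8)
The plan is to reduce both assertions to a single fact: $u_r-v_r$ is an element of the finite-dimensional space $\mathcal{P}_2$, and its norm is bounded uniformly in $r$. First I would record the normalization. Writing $\tu_r(x)=\dfrac{u(rx+y)-rx\cdot\nabla u(y)-u(y)}{r^2}$, we have $u_r=\tu_r-\Pi_y(u,r)$ and $v_r=\tu_r-Q_y(u,r)$: indeed, the affine function $x\mapsto \tfrac1r x\cdot\nabla u(y)+\tfrac{u(y)}{r^2}$ has vanishing Hessian and is $L^2(\partial B_1)$-orthogonal to $\mathcal{P}_2$ (degree-two spherical harmonics have zero mean on $\partial B_1$ and are $L^2(\partial B_1)$-orthogonal to affine functions; alternatively combine (i) and (iii) of Lemma \ref{Q}), so that $\Pi_0(\tu_r,1)=\Pi_y(u,r)$ and $Q_0(\tu_r,1)=Q_y(u,r)$.

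For (i), subtract the two identities above to get $u_r-v_r=Q_y(u,r)-\Pi_y(u,r)\in\mathcal{P}_2$. Since $Q_0(\cdot,1)$ is linear (part (i) of Lemma \ref{Q}) and restricts to the identity on $\mathcal{P}_2$ (part (iv)), and $\Pi_y(u,r)\in\mathcal{P}_2$, I obtain
$$Q_0(u_r,1)=Q_0(\tu_r,1)-Q_0(\Pi_y(u,r),1)=Q_y(u,r)-\Pi_y(u,r)=u_r-v_r .$$
By part (vi) of Lemma \ref{Q}, $\|u_r-v_r\|_{L^2(\partial B_1)}=\|Q_0(u_r,1)\|_{L^2(\partial B_1)}\le\|u_r\|_{L^2(\partial B_1)}$, and Lemma \ref{w2p} bounds $\|u_r\|_{C^{1,\alpha}(B_1)}$, hence $\|u_r\|_{L^2(\partial B_1)}$, uniformly in $r$ (Lemma \ref{w2p} is stated for $r\le 1/4$, which is the relevant range). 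Thus $\{u_r-v_r\}$ is a bounded family in $\mathcal{P}_2$; since all norms on $\mathcal{P}_2$ are equivalent (Lemma \ref{lem: normsequivalent}) it is bounded in $C^k(B_1)$ for every $k$, i.e. uniformly bounded in $C^\infty$, which is (i).

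Part (ii) then follows immediately by writing $v_r=u_r-(u_r-v_r)$: the family $\{u_r\}$ is bounded in $C^{1,\alpha}(B_1)\cap W^{2,p}(B_1)$ by Lemma \ref{w2p}, $\{u_r-v_r\}$ is bounded in $C^\infty(B_1)$ by part (i), and a sum of two uniformly bounded families is uniformly bounded in $C^{1,\alpha}(B_1)\cap W^{2,p}(B_1)$ for every $0<\alpha<1$ and $p>1$. I do not expect any real difficulty here: the only step with genuine content is the identity $u_r-v_r=Q_0(u_r,1)$, which is precisely what converts the already-known $C^{1,\alpha}$ bound on $u_r$ (Lemma \ref{w2p}) into a uniform bound on the discrepancy $Q_y(u,r)-\Pi_y(u,r)$ between the two projections, via the contraction property (vi) of Lemma \ref{Q} and the equivalence of norms on $\mathcal{P}_2$.
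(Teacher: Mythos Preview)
Your proof is correct and follows essentially the same approach as the paper: both identify $u_r-v_r=Q_y(u,r)-\Pi_y(u,r)\in\mathcal{P}_2$, rewrite it as $Q_0(u_r,1)$, bound this by the norm of $u_r$ via the contraction property of $Q$, and invoke Lemma \ref{w2p} together with norm equivalence on $\mathcal{P}_2$. The only cosmetic difference is that the paper bounds $\sup_{B_1}|Q_0(u_r,1)|$ directly by $\sup_{B_1}|u_r|$, whereas you pass through the $L^2(\partial B_1)$ norm first; part (ii) is handled identically.
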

				\begin{proof}
					i. For each $r$, the difference $u_r-v_r=Q_y(u,r)-\Pi_y(u,r)$ is a second order harmonic polynomial. Therefore, it suffices to show that $L^{\infty}$ norm of that difference admits a bound independent of $r$. Note that
					\begin{align*}
					u_r-v_r&=Q_y(u,r)-\Pi_y(u,r)\\
					&=Q_0\left(\frac{u(rx+y)-rx\cdot \nabla u(y)-u(y)}{r^2}-\Pi_y(u,r),1\right)=Q_0(u_r,1).
					\end{align*}
					Hence,
					\begin{align*}
					\sup\limits_{r}	\sup\limits_{B_1} |Q_0(u_r,1)|\leq C \sup\limits_{r}\sup\limits_{B_1} |u_r|<\infty.
					\end{align*}
					ii. Lemma \ref{w2p} implies that $\{u_r\}_{r>0}$ is bounded in $C^{1,\alpha}(B_1)\cap W^{2,p}(B_1)$ for every $\alpha<1$ and $p>1$. Hence, the result follows from i.
				\end{proof}
				
				\section{$C^{1,1}$ regularity: general case}

			 	In this section we utilize the previous technical tools and prove $C^{1,1}$ regularity provided that $f=f(x,t)$ satisfies assumption \textbf{A}: 
				
\textbf{Assumption $\mathbf{A}$.} \begin{description}[leftmargin=0pt]
	\item[(i)\label{alt(i)}]  \begin{equation}
	|f(x,t_2)-f(x,t_1)|\leq h(x)\omega(|t_2-t_1|),
	\end{equation}
	where $h \in L^\infty(B_1)$ and
	\[\int\limits_{0}^{\epsilon} \frac{\omega(t)}{t}dt<\infty,
	\]
	for some $\epsilon>0$;\\
	\item[(ii)\label{alt(ii)}] The Newtonian potential of $x \mapsto f(x,t)$ is $C^{1,1}$ locally uniformly in $t$: for $v_t:=f(\cdot,t) * N$ where $N$ is the Newtonian potential, 
	$$
	\sup_{a\leq t \leq b} \|D^2 v_t\|_{L^\infty(B_1)} < \infty,\quad \text{for all}\quad a,b \in \RR.
	$$ 
\end{description}

				\begin{proof}[\bf{Proof of Theorem \ref{contcase}}]
					Let $y \in B_{1/2}$ and $v=v_{u(y)}=f(x,u(y)) * N$. 
					Note that if
					\[u_r(x)=\frac{u(rx+y)-rx \cdot \nabla u(y)-u(y)}{r^2}-\Pi_y(u,r),
					\]
					then
					\[\Pi_y(u,r/2)-\Pi_y(u,r)=\Pi_y(u_r,1/2)-\Pi_y(u_r,1)=\Pi_y(u_r,1/2).
					\]
					Using this identity, Lemma \ref{lma: keyineq}, and Lemma \ref{gro}
					\begin{align*}
					&\phantom{{}={}}\|\Pi_y(u,r/2)-\Pi_y(u,r)-\Pi_y(v,r/2)+\Pi_y(v,r)\|_{L^\infty(B_1)}\\
					&=\|\Pi_y(u_r,1/2)-\Pi_y(v_r,1/2)-\Pi_y(u_r,1)+\Pi_y(v_r,1)\|_{L^\infty(B_1)}\\
					&= \|\Pi_y(u_r-v_r,1/2)-\Pi_y(u_r-v_r,1)\|_{L^\infty(B_1)}\\
					&\leq C \|\Delta u_r-\Delta v_r\|_{L^2(B_1)}\\
					&=\|f(rx+y,u(rx+y))-f(rx+y,u(y))\|_{L^2(B_1)}\\
					&\leq C \omega\left(\sup \limits_{B_r(y)}|u(x)-u(y)|\right) \leq C \omega\left(c(r+r^2 \log \frac{1}{r})\right) \le C\omega\left(cr\right) ,
					\end{align*}
					for $r>0$ sufficiently small ($|\nabla u(y)|$ is controlled by $\|u\|_{W^{2,p}(B_1)}$). Hence, for $r_0>0$ small enough and $y \in B_{1/2}$ we have
					\begin{align*}
					&\phantom{{}={}}\|\Pi_y(u,r_0/2^j)-\Pi_y(u,r_0)\|_{L^\infty(B_1)}\\
					&\leq \bigg\|\sum\limits_{k=1}^{j} \Pi_y(v,r_0/2^k)-\Pi_y(v,r_0/2^{k-1})\bigg\|_{L^\infty(B_1)}\\
					&+ \sum\limits_{k=1}^{j} \bigg \|\Pi_y(u,r_0/2^k)-\Pi_y(u,r_0/2^{k-1})-\Pi_y(v,r_0/2^k)+\Pi_y(v,r_0/2^{k-1})\bigg\|_{L^\infty(B_1)}\\
					&\leq C \|D^2 v_{u(y)}\|_{L^{\infty}(B_1)}+C \sum\limits_{k=1}^{\infty} \omega\left(\frac{cr}{2^{k-1}}\right)\le \tilde C(\|D^2 v_{u(y)}\|_{L^{\infty}(B_1)}+1)\\
					&\leq \tilde C\left(\sup_{|s| \leq \sup |u|} \|D^2 v_s\|_{L^\infty(B_1)}+1\right).
					\end{align*}
					Thus 
					
					\begin{align}\label{eq: 1}
					\|\Pi_y(u,r_0/2^j)\|_{L^\infty (B_1)} &\le \|\Pi_y(u,r_0)\|_{L^\infty(B_1)} + \tilde{C}(\|D^2 v_{u(y)}\|_{L^{\infty}}+1).
					\end{align}
					We conclude via Lemma \ref{c11} and Lemma \ref{log}.
				\end{proof}
				
				\begin{rem}
					To generate examples, consider $f(x,t)=\phi(x) \psi(t)$. If $\phi \in L^\infty$ and $\psi$ is Dini, then $f$ satisfies condition (i). If $\phi * N$ is $C^{1,1}$ and $\psi$ is locally bounded, then $f$ satisfies (ii). Thus if $\phi * N$ is $C^{1,1}$ and $\psi$ is Dini, then $f$ satisfies both conditions. In particular, $f$ may be strictly weaker than Dini in the $x$-variable.
				\end{rem}
				\begin{rem}\label{rmrk: ineq_for_Q}
					The projection $Q_y$ has similar properties to $\Pi_y$. Consequently, if $f$ satisfies assumption \textbf{A}, \eqref{eq: 1} holds for $\Pi_y$ replaced by $Q_y$.  
				\end{rem}

				\section{$C^{1,1}$ regularity: discontinuous case}
				
				The goal of this section is to investigate the optimal regularity for solutions of \eqref{pro} with $f$ having a jump discontinuity in the $t$-variable. This case may be viewed as a free boundary problem. The idea is to employ again an $L^2$ projection operator.

									\subsection{Two-phase obstacle problem }
									Suppose $f=f(x,u)$ has the form   
									
									\begin{equation}\label{f form}
										f(x,u)=g_1(x,u)\chi_{\{u>0\}}+g_2(x,u)\chi_{\{u<0\}},
									\end{equation}
									where $g_1,g_2$ are continuous. We recall from the introduction that if $f$ has a jump in $u$ at the origin, then we assume it to be a positive jump:   
									\vskip .3 cm
									\textbf{Assumption B.} $g_1(x,0)-g_2(x,0) \geq \sigma_0,\ x \in B_1$ for some $\sigma_0>0$.
									\vskip .3 cm
					
					\begin{rem}
In the unstable obstacle problem, i.e. $g_1=-1$, $g_2=0$, there exists a solution which is $C^{1,\alpha}$ for any $\alpha \in (0,1)$ but not $C^{1,1}$.  
\end{rem}
					
					Let $\Gamma^0:=\Gamma \cap \{|\nabla u|=u=0\}$ and $\Gamma^1:=\Gamma \cap \{|\nabla u| \neq 0\}$. Our main result provides optimal growth away from points with sufficiently small gradients.
									
\begin{thm}\label{maintheorem}
	Suppose $g_1,g_2 \in C^0$ satisfy \textbf{B}. Then for all constants $\theta,M>0$ there exist $r_0(\theta,M,\|g_1\|_{\infty},\|g_2\|_{\infty},n)>0$ and $C_0(\theta,M,\|g_1\|_{\infty},\|g_2\|_{\infty},n)>0$ such that for any solution of \eqref{pro} with $\|u\|_{L^\infty(B_1)} \leq M$
	\begin{equation}\label{eq: C_0_ineq}
		\|Q_y(u,r)\|_{L^2(\partial B_1(0))} \leq C_0,
	\end{equation}
	for all $r\leq r_0$ and $y \in B_{1/2} \cap \Gamma \cap\{|\nabla u(y)| <\theta r\}$. Consequently, for the same choice of $r$ and $y$ we have that
	\begin{equation}\label{eq: C_1_ineq}
		\sup\limits_{x \in B_r}|u(x+y)-x \cdot \nabla u(y)|\leq C_1 r^2,
	\end{equation}
	 for some constant $C_1(\theta,M,\|g_1\|_{\infty},\|g_2\|_{\infty},n)>0$.
\end{thm}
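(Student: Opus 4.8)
The plan is to control the quantity $\phi(r):=\|Q_y(u,r)\|_{L^2(\partial B_1)}^2$ through the monotonicity identity of Lemma~\ref{normderivative}, by showing that $\phi'(r)\ge 0$ whenever $\phi(r)$ exceeds a threshold, and then integrating this differential inequality down from $r=r_0$. Set $\Lambda:=\max(\|g_1\|_\infty,\|g_2\|_\infty)$, so $\|\Delta u\|_{L^\infty(B_1)}\le\Lambda$; since $y\in\Gamma=\partial\{u\neq 0\}$ and $u$ is continuous, $u(y)=0$. By Calder\'on--Zygmund theory and Sobolev embedding, $\|u\|_{C^{1,\alpha}(B_{1/2})}\le C(n,\Lambda,M)$ and $u\in W^{2,p}$ locally for every $p$, so $\phi$ is $C^1$ on $(0,1/4)$ and, writing $q_r:=Q_y(u,r)\in\Pp_2$, Lemma~\ref{normderivative} gives
\[
\phi'(r)=\frac{2}{r}\int_{B_1}q_r(x)\,\Delta u(rx+y)\,dx .
\]

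Next I would record a normalization. By Lemma~\ref{u-v}, using $u(y)=0$ and the hypothesis $|\nabla u(y)|<\theta r$, one has on $B_1$
\[
\frac{u(rx+y)}{r^2}=q_r(x)+E_r(x),\qquad \|E_r\|_{C^{1,\alpha}(B_1)}\le C_2=C_2(\theta,M,\Lambda,n),
\]
and, directly from the $C^{1,\alpha}$ bound, $\sup_{B_r(y)}|u|\le\theta r^2+Cr^{1+\alpha}\to 0$ as $r\to 0^+$, uniformly in $y\in B_{1/2}$ and in the admissible solutions. In particular, setting $A^{\pm}:=\{x\in B_1:\ \pm u(rx+y)>0\}$, the symmetric differences $A^{+}\triangle\{q_r>0\}$ and $A^{-}\triangle\{q_r<0\}$ are contained in $\{|q_r|\le C_2\}$, a set on which $|q_r|$ is bounded. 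Also, by continuity of $g_1,g_2$ there is a modulus $\eta(r)\to 0$, uniform in $y\in B_{1/2}$, with $|g_i(rx+y,u(rx+y))-g_i(y,0)|\le\eta(r)$ for $x\in B_1$.

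The crux --- and the main obstacle --- is the lower bound
\[
\int_{B_1}q_r(x)\,\Delta u(rx+y)\,dx\ \ge\ c(n)\,\sigma_0\,\sqrt{\phi(r)}-C_3,\qquad r\le r_0,
\]
with $C_3=C_3(\theta,M,\Lambda,n)$. To obtain it, note first that on $\{u(rx+y)=0\}$ one has $\Delta u(rx+y)=f(rx+y,0)=0$, hence $\int_{B_1}q_r\Delta u(rx+y)\,dx=\int_{A^+}q_rg_1+\int_{A^-}q_rg_2$ with $g_i=g_i(rx+y,u(rx+y))$; replacing $A^{\pm}$ by $\{\pm q_r>0\}$ changes the integral by at most $O(\Lambda C_2)$ (the discrepancy lies in $\{|q_r|\le C_2\}$), and then $\int_{\{q_r>0\}}q_rg_1+\int_{\{q_r<0\}}q_rg_2=\int_{\{q_r>0\}}q_r(g_1-g_2)+\int_{B_1}q_rg_2$. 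Now \textbf{Assumption B} together with $\sup_{B_r(y)}|u|\to 0$ gives $g_1-g_2\ge\sigma_0/2$ on $B_1$ once $r\le r_0$; since $q_r$ is a homogeneous harmonic quadratic, $\int_{B_1}q_r=q_r(0)=0$, so $\int_{\{q_r>0\}}q_r=\tfrac12\int_{B_1}|q_r|$; and $|\int_{B_1}q_rg_2|\le\eta(r)\int_{B_1}|q_r|$ because the $g_2(y,0)$–part of $g_2$ integrates to $0$ against $q_r$. Shrinking $r_0$ so that $\eta(r_0)$ is small compared with $\sigma_0$, and invoking the equivalence of norms on $\Pp_2$ (Lemma~\ref{lem: normsequivalent}) to pass from $\int_{B_1}|q_r|$ to $\sqrt{\phi(r)}$, gives the displayed bound; hence $\phi'(r)\ge\frac{2}{r}\bigl(c(n)\sigma_0\sqrt{\phi(r)}-C_3\bigr)$, so $\phi'(r)\ge 0$ whenever $\sqrt{\phi(r)}\ge N:=C_3/(c(n)\sigma_0)$. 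The difficulty is genuinely in making these three facts --- positivity of the jump, vanishing mean of harmonic quadratics, and sign agreement of $u(rx+y)$ with $q_r$ off a bounded set --- interact while $g_1,g_2$ are only continuous (which is what forces $r_0$, and hence $C_0,C_1$, to depend also on the moduli of continuity of $g_i$ and on $\sigma_0$, though uniformly in $y$ and in the solution).

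Finally, for fixed $y$ the admissible radii form $(|\nabla u(y)|/\theta,\ r_0]$, and a standard barrier argument using $\phi'(r)\ge 0$ on $\{\sqrt\phi\ge N\}$ shows $\sqrt{\phi(r)}\le\max\bigl(N,\sqrt{\phi(r_0)}\bigr)$ there (if $\sqrt\phi$ exceeded this value at some scale it would be nondecreasing on the sub-interval up to $r_0$, a contradiction). Since $Q_y(u,r_0)$ is the $L^2(\partial B_1)$–projection onto $\Pp_2$ of $x\mapsto u(r_0x+y)/r_0^2$, we have $\sqrt{\phi(r_0)}\le\|u(r_0\,\cdot+y)/r_0^2\|_{L^2(\partial B_1)}\le C(n)M/r_0^2$, and since $r_0$ is already fixed this yields \eqref{eq: C_0_ineq}. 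For \eqref{eq: C_1_ineq}, substitute $x=r\xi$ and use $u(r\xi+y)/r^2=q_r(\xi)+v_r(\xi)+\xi\cdot\nabla u(y)/r$ from Lemma~\ref{u-v}, so that
\[
\sup_{x\in B_r}\bigl|u(x+y)-x\cdot\nabla u(y)\bigr|=r^2\sup_{\xi\in B_1}\bigl|q_r(\xi)+v_r(\xi)\bigr|\le r^2\bigl(c(n)C_0+C_2\bigr),
\]
where $\|q_r\|_{C^0(B_1)}\le c(n)\|q_r\|_{L^2(\partial B_1)}\le c(n)C_0$ and Lemma~\ref{u-v} were used; this gives \eqref{eq: C_1_ineq} with $C_1=c(n)C_0+C_2$.
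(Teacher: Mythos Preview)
Your proof is correct and follows the same overall architecture as the paper: establish that $\phi(r)=\|Q_y(u,r)\|_{L^2(\partial B_1)}^2$ is nondecreasing once it exceeds a threshold, then bound $\phi(r_0)$ and run a barrier argument down the interval $(|\nabla u(y)|/\theta,\,r_0]$; the growth estimate \eqref{eq: C_1_ineq} then follows from Lemmas~\ref{w2p} and~\ref{u-v} exactly as in the paper. The genuine difference is in how the monotonicity is obtained. The paper isolates this as Lemma~\ref{normdecay} and proves it by \emph{compactness and contradiction}: assuming a sequence with $T_k\to\infty$ and nonpositive derivative, one passes to a blow-up limit where $\Delta v_k\to g_1(y_0,0)\chi_{\{q>0\}}+g_2(y_0,0)\chi_{\{q<0\}}$ and checks $\int_{B_1}q\,\Delta v>0$ directly from \textbf{B}. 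You instead give a \emph{direct quantitative} argument, using (a) the sign agreement $\{u(r\cdot+y)>0\}\triangle\{q_r>0\}\subset\{|q_r|\le C_2\}$ from Lemma~\ref{u-v}, (b) the algebraic identities $\int_{B_1}q_r=0$ and $\int_{\{q_r>0\}}q_r=\tfrac12\int_{B_1}|q_r|$, and (c) the modulus $\eta(r)$ to freeze $g_i$ at $(y,0)$. Your route avoids blow-up limits altogether and makes the threshold $N=C_3/(c(n)\sigma_0)$ explicit, at the cost of tracking several error terms; the paper's route is shorter and cleaner, but the dependence of $\kappa_0$ on the data is implicit. Your parenthetical remark that $r_0$ (hence $C_0,C_1$) must in fact depend on the moduli of continuity of $g_1,g_2$ and on $\sigma_0$---not only on $\|g_i\|_\infty$---is well taken: the paper's compactness proof has the same feature, since $g_1,g_2$ are fixed throughout the argument.
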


The proof of the theorem is carried out in several steps. A crucial ingredient is the following monotonicity result.

\begin{lem}\label{normdecay}
	Suppose $g_1,g_2 \in C^0$ satisfy \textbf{B}. Then for all constants $\theta,M>0$ there exist $\kappa_0(\theta, M,\|g_1\|_{\infty},\|g_2\|_{\infty},n)>0$ and $r_0(\theta,M,\|g_1\|_{\infty},\|g_2\|_{\infty},n)>0$ such that for any solution $u$ of \eqref{pro} with $\|u\|_{L^\infty(B_1)} \leq M$ if
	\[\|Q_y(u,r)\|_{L^2(\partial B_1)} \geq \kappa_0,
	\]
	for some $0<r<r_0$ and $y \in B_{1/2} \cap \Gamma \cap\{|\nabla u(y)| <\theta r\}$, then
	\begin{equation}
		\frac{d}{dr} \int\limits_{\partial B_1} Q_y^2(u,r) d \HH^{n-1}>0.
	\end{equation}
\end{lem}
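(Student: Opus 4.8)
The plan is to compute the derivative $\frac{d}{dr}\int_{\partial B_1} Q_y^2(u,r)\,d\mathcal{H}^{n-1}$ via Lemma~\ref{normderivative}, which gives
\[
\frac{d}{dr}\int_{\partial B_1} Q_y^2(u,r)\,d\mathcal{H}^{n-1}=\frac{2}{r}\int_{B_1} Q_y(u,r)(x)\,\Delta u(rx+y)\,dx=\frac{2}{r}\int_{B_1} Q_y(u,r)(x)\,f(rx+y,u(rx+y))\,dx,
\]
and to show the right-hand integral is strictly positive under the hypothesis $\|Q_y(u,r)\|_{L^2(\partial B_1)}\ge\kappa_0$. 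Write $q:=Q_y(u,r)\in\Pp_2$. The key structural fact is that $q$ is a second order homogeneous harmonic polynomial, hence $q>0$ on a cone (the set $\{q>0\}$) occupying exactly half of $B_1$ by symmetry ($q(-x)=q(x)$ but $\int q=0$ forces the positivity and negativity sets to have equal measure), and similarly $q<0$ on the complementary half. The idea is that near a point $y\in\Gamma^0$ with small gradient, the rescaled solution $\tu_r(x)=u(rx+y)/r^2$ is $C^{1,\alpha}$-close to $q+($bounded linear part$)$ with the linear part negligible because $|\nabla u(y)|<\theta r$; so $\{u(rx+y)>0\}$ is essentially $\{q>0\}$ up to a set whose measure $\to 0$, and on $\{q>0\}$ we have $f(rx+y,u(rx+y))\approx g_1(rx+y,0)$ while on $\{q<0\}$ we have $f\approx g_2(rx+y,0)$. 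Therefore
\[
\int_{B_1} q\,f(rx+y,u(rx+y))\,dx\approx\int_{\{q>0\}} q\,g_1\,dx+\int_{\{q<0\}} q\,g_2\,dx=\int_{B_1}q\,g_2\,dx+\int_{\{q>0\}} q\,(g_1-g_2)\,dx,
\]
and since $\int_{B_1} q\cdot(\text{const})\,dx=0$ for any constant (as $\int_{B_1}q=0$) and $g_2$ is continuous hence nearly constant on $B_r(y)$, the first term is $o(1)$, while Assumption~\textbf{B} gives $g_1-g_2\ge\sigma_0>0$ so the second term is $\ge\sigma_0\int_{\{q>0\}}q\,dx=\tfrac{\sigma_0}{2}\int_{B_1}|q|\,dx$, which by Lemma~\ref{lem: normsequivalent} (equivalence of norms on $\Pp_2$) is $\ge c(n)\sigma_0\|q\|_{L^2(\partial B_1)}\ge c(n)\sigma_0\kappa_0$.

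Concretely I would proceed as follows. First, fix $\theta,M$ and pick $\kappa_0,r_0$ to be chosen. By Lemma~\ref{w2p} and Lemma~\ref{u-v}, the family $v_r(x)=\tu_r(x)-rx\cdot\nabla u(y)/r^2\cdots$ — more precisely $\tu_r(x)-q(x)$ after subtracting the projection and the negligible linear term — is bounded in $C^{1,\alpha}(B_1)$ uniformly in $r$, and one extracts that $u(rx+y)/r^2 = q(x) + (\text{linear, with slope }<\theta)+ w_r(x)$ where $w_r$ has controlled $C^{1,\alpha}$ norm. Here I use the hypothesis $y\in\Gamma$, so $u(y)=0$, and $|\nabla u(y)|<\theta r$ to see the linear term has coefficient bounded by $\theta$. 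Second, I estimate the measure of the "bad set" $E_r:=\{x\in B_1: \operatorname{sgn}u(rx+y)\ne\operatorname{sgn}q(x)\}$. This set is contained in a neighborhood of $\{q=0\}\cup\{|q|\le\delta\}$ plus the region where $|w_r|+\theta\cdot|x|$ dominates $|q|$; since $q$ is a fixed-shape polynomial of unit-ish size and $\{|q|\le\delta\}$ has measure $\le C(n)\delta$, and $w_r$ is uniformly bounded in $C^{1,\alpha}$ (but NOT small — this is the issue, see below), I need an additional compactness/blow-up argument to make $|E_r|$ small. Third, assuming $|E_r|\le\eta$ and $|q|\le\|q\|_{L^\infty(B_1)}\le C\|q\|_{L^2(\partial B_1)}$ (norm equivalence), I split $\int_{B_1}q f = \int_{B_1\setminus E_r} q f + \int_{E_r} q f$; the second piece is $\le \|f\|_{L^\infty}\|q\|_{L^\infty}\eta\le C(M,\|g_i\|_\infty,n)\|q\|_{L^2(\partial B_1)}\,\eta$, and on $B_1\setminus E_r$ I replace $f(rx+y,u(rx+y))$ by $g_1(y,0)\chi_{\{q>0\}}+g_2(y,0)\chi_{\{q<0\}}$ up to an error controlled by the modulus of continuity of $g_1,g_2$ (in $x$ over $B_r(y)$, which is $o(1)$ as $r\to0$, and in $u$ over the range $|u|\le Cr^2\log(1/r)$ by Lemma~\ref{gro}, also $o(1)$). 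Fourth, the main term equals $(g_1(y,0)-g_2(y,0))\int_{\{q>0\}}q + g_2(y,0)\int_{B_1}q = (g_1(y,0)-g_2(y,0))\cdot\tfrac12\int_{B_1}|q|\ge \tfrac{\sigma_0}{2}\int_{B_1}|q|\ge c(n)\sigma_0\|q\|_{L^2(\partial B_1)}$. Combining, $\frac{r}{2}\frac{d}{dr}\int_{\partial B_1}Q_y^2 \ge \|q\|_{L^2(\partial B_1)}\big(c(n)\sigma_0 - C\eta - o_{r\to 0}(1)\big)$; choosing first $r_0$ small and using $\|q\|_{L^2(\partial B_1)}\ge\kappa_0>0$ forces the bracket positive, giving the claim. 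The constant $\kappa_0$ itself does not actually enter the sign — it only guarantees $q\ne 0$ — but it is recorded because the "$o(1)$" errors must be beaten by the fixed gap $c(n)\sigma_0$, and the quantitative largeness $\|q\|_{L^2}\ge\kappa_0$ is what makes $\frac{d}{dr}\int Q_y^2$ \emph{strictly} positive rather than merely nonnegative.

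The main obstacle is the second step: $w_r$ is bounded in $C^{1,\alpha}$ but there is no reason it is \emph{small}, so the naive pointwise comparison of $\operatorname{sgn}u(rx+y)$ with $\operatorname{sgn}q(x)$ fails and $|E_r|$ need not be small for a single fixed $r$. The remedy is a contradiction/compactness argument: suppose the conclusion fails along a sequence $r_j\to0$ with $\|Q_{y_j}(u_j,r_j)\|_{L^2(\partial B_1)}\ge\kappa_0$ but $\frac{d}{dr}\int Q^2\le 0$; pass to the limit in the rescalings $\tu_{r_j}$, using Lemma~\ref{w2p} for $C^{1,\alpha}$ compactness, to obtain a global solution $u_0$ of $\Delta u_0 = g_1(0)\chi_{\{u_0>0\}}+g_2(0)\chi_{\{u_0<0\}}$ with $\nabla u_0(0)=0$, $u_0(0)=0$, and $Q_0(u_0,1)=\lim Q_{y_j}(u_j,r_j)\ne 0$; for this limiting two-phase obstacle-type problem with constant coefficients one computes directly (the positivity/negativity sets of $u_0$ and of its harmonic second-order part $q_0$ agree up to measure zero by the standard structure of such global solutions, or one argues via Lemma~\ref{normderivative} applied to $u_0$ and the clean identity $\int_{B_1}q_0 f_0 = \tfrac{\sigma_0}{2}\int|q_0|>0$) that $\frac{d}{dr}\int_{\partial B_1}Q_0^2(u_0,r)\big|_{r=1}>0$, contradicting the limit of the assumed sign. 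Making the convergence $E_{r_j}\to E_0$ (in measure) rigorous — i.e. that the positivity sets converge — is where one invokes nondegeneracy of $u_0$ near its free boundary, which holds precisely because of Assumption~\textbf{B}; this is the technical heart and I would isolate it as a separate sublemma if the direct estimate on $|E_r|$ proves too delicate.
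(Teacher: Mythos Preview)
You have the right formula (Lemma~\ref{normderivative}) and the right endgame: once the sign of $u(rx+y)$ agrees with the sign of $q=Q_y(u,r)$ up to a small set, the integral becomes $(g_1(y,0)-g_2(y,0))\int_{\{q>0\}}q\,dx>0$ by Assumption~\textbf{B}. The gap is in how you force this sign agreement, and it comes from a misreading of the role of $\kappa_0$.

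You write that ``the constant $\kappa_0$ itself does not actually enter the sign --- it only guarantees $q\ne 0$,'' and you then treat the remainder $w_r$ (bounded but not small) as an obstacle requiring a blow-up limit $u_0$ and structural facts about it. This is backwards. The decomposition is $u(rx+y)/r^2=q(x)+w_r(x)+\ell(x)$ with $\|w_r\|_{L^\infty}+\|\ell\|_{L^\infty}\le C$ uniformly (Lemmas~\ref{w2p} and~\ref{u-v}, plus $|\nabla u(y)|<\theta r$), so the bad set satisfies $E_r\subset\{|q|\le C\}$; since $\|q\|\ge\kappa_0$, this has measure $O(1/\kappa_0)$. Thus $\kappa_0$ large is \emph{exactly} what makes $|E_r|$ small.

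The paper exploits this through the correct negation of the statement: if the lemma fails, then for \emph{every} $\kappa_0$ and $r_0$ there is a counterexample, so one obtains sequences $u_k,y_k,r_k\to 0$ with $T_k:=\|Q_{y_k}(u_k,r_k)\|_{L^2(\partial B_1)}\to\infty$. Writing $q_k=Q_{y_k}(u_k,r_k)/T_k\to q$ (unit norm) and $v_k=\tu_{r_k}-Q_{y_k}(u_k,r_k)$ (bounded in $C^{1,\alpha}$), one has $u_k(r_kx+y_k)/(r_k^2T_k)\to q(x)$ pointwise, hence $\chi_{\{u_k(r_kx+y_k)>0\}}\to\chi_{\{q>0\}}$ a.e.; passing to the limit in $\int_{B_1}q_k\,\Delta v_k\le 0$ gives $(g_1(y_0,0)-g_2(y_0,0))\int_{\{q>0\}}q\,dx\le 0$, a contradiction. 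No classification of global solutions, no nondegeneracy of a limit $u_0$, is invoked.

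By contrast, your fallback argument fixes $\kappa_0$, extracts a limit $u_0$ of the rescalings themselves, and then appeals to ``the standard structure of such global solutions'' or ``nondegeneracy of $u_0$ near its free boundary'' to match $\{u_0>0\}$ with $\{q_0>0\}$. None of this is established: you have not shown $|\{u_0=0\}|=0$, so even the limiting equation $\Delta u_0=g_1\chi_{\{u_0>0\}}+g_2\chi_{\{u_0<0\}}$ is unjustified (the indicators $\chi_{\{u_k>0\}}$ need not converge to $\chi_{\{u_0>0\}}$), and the subsequent identification of sign sets is precisely the kind of global-solution analysis the paper avoids. The fix is simply to negate the lemma properly and use $T_k\to\infty$.
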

\begin{proof}
	If the conclusion is not true, then there exist radii $r_k \to 0$, solutions $u_k$ and points $y_k \in B_{1/2} \cap \Gamma_k \cap\{|\nabla u_k(y_k)| <\theta r_k\}$ such that $\|u_k\|_{L^\infty(B_1)} \le M$, and $\|Q_{y_k}(u_k,r_k)\|_{L^2(\partial B_1)} \to \infty$, and
	\[\frac{d}{dr} \int\limits_{\partial B_1} Q_{y_k}^2(u_k,r) d \HH^{n-1}\bigg|_{r=r_k}\leq 0.\]
	Let
	\[T_k:=\|Q_{y_k}(u_k,r_k)\|_{L^2(\partial B_1)},\]
	and consider the sequence
	\[v_k(x)=\frac{u_k(r_kx+y_k)-r_kx\cdot \nabla u_k(y_k)}{r_k^2}-Q_{y_k}(u_k,r_k).\]
	Without loss of generality we can assume that $y_k\to y_0$ for some $y_0 \in B_{1/2}$. Lemma \ref{w2p} implies the existence of a function $v$ such that up to a subsequence
	\[v_k(x)=\frac{u_k(r_kx+y_k)-r_kx\cdot \nabla u_k(y_k)}{r_k^2}-Q_{y_k}(u_k,r_k) \rightarrow v,\ \text{in}\ C_{\text{loc}}^{1,\alpha}(\RR^n)\cap W^{2,p}_{\text{loc}}(\RR^n).
	\]
	Evidently, $v(y_0)=|\nabla v(y_0)|=0$. Moreover, for $q_k(x):=Q_{y_k}(u_k,r_k)/T_k$, we can assume that up to a further subsequence, $q_k \to q$ in $C^{\infty}$ for some $q \in \Pp_2$. Note that 
	\begin{align}
		\Delta v_k(x)&=g_1(r_kx+y_k,u_k(r_k x+y_k))\chi_{\{u_k(r_kx+y_k)>0\}}\\
		&+g_2(r_kx+y_k,u_k(r_kx+y_k))\chi_{\{u_k(r_kx+y_k)<0\}}
	\end{align}
	hence
	\begin{equation}
	\Delta v_k \to \Delta v = g_1(y_0,0)\chi_{\{q(x)>0\}}+g_2(y_0,0)\chi_{\{q(x)<0\}}.
	\end{equation}
	By Lemma \ref{normderivative}, 
	\begin{align}
		0&\geq\frac{d}{dr} \int\limits_{\partial B_1} Q_{y_k}^2(u_k,r) d \HH^{n-1}\bigg|_{r=r_k}=\frac{2}{r_k}\int\limits_{B_1} Q_{y_k}(u_k,r_k) \Delta u_k(r_kx+y_k) d x\\
		&=\frac{2T_k}{r_k}\int\limits_{B_1} q_k(x) \Delta v_k(x) d x.
	\end{align}
	Therefore $$\int\limits_{B_1} q_k(x) \Delta v_k(x) d x \leq 0.$$ On the other hand
	\begin{align}
	\lim\limits_{k \to \infty}& \int\limits_{B_1} q_k(x) \Delta v_k(x) d x=\int\limits_{B_1} q(x)\left(g_1(0,y_0)\chi_{\{q(x)>0\}}+g_2(0,y_0)\chi_{\{q(x)<0\}}\right) dx\\
	&=\left(g_1(0,y_0)-g_2(0,y_0)\right)\int\limits_{q(x)>0} q(x)dx>0,
	\end{align}
	a contradiction.
\end{proof}
	
\begin{proof}[\textbf{Proof of Theorem \ref{maintheorem}}]
	Let $\kappa_0$ and $r_0$ be the constants from Lemma \ref{normdecay}. Without loss of generality we can assume that $r_0\leq 1/4$. From Lemmas \ref{log} and \ref{u-v} we have that
	\[\|Q_y(u,r_0)\|_{L^2(\partial B_1)}\leq C \log\frac{1}{r_0},
	\]
	for all $y \in B_{1/2}$, where $C=C(M,\|g_1\|_{\infty},\|g_2\|_{\infty},n)$ is a constant. Take
	\begin{equation}\label{eq: M_0}
		C_0=\max \left(k_0,2C\log \frac{1}{r_0}\right).
	\end{equation}
	We claim that
	\[\|Q_y(u,r)\|_{L^2(\partial B_1)} \leq C_0,
	\]
	for $r\leq r_0$ and $y \in B_{1/2} \cap \Gamma \cap\{|\nabla u(y)| <\theta r\}$. Let us fix $y$ such that $|\nabla u(y)|\leq \theta r_0$ and consider
	\begin{equation}\label{eq: T_y}
		T_y(r):=\|Q_y(u,r)\|_{L^2(\partial B_1)}
	\end{equation}
	as a function of $r$ on the interval $\nabla |u(y)|/\theta \leq r\leq r_0$. Let
	\begin{equation}\label{eq: e}
		e:=\inf \{r\ \text{s.t.}\ T_y(r)\leq C_0\}.
	\end{equation}
	We have that $T_y(r_0)<C_0$, so $|\nabla u(y)|/\theta\leq e<r_0$. If $e>|\nabla u(y)|/\theta$ then $T_y(e)=C_0$ and by Lemma \ref{normdecay} we have that $T_{y}'(e)>0$, so $T_y(r)<C_0$ for $e-\eps<r<e$ which contradicts \eqref{eq: e}.
	
	Therefore, $e=|\nabla u(y)|/\theta$ and $T_y(r)\leq C_0$ for all $|\nabla u(y)|/\theta\leq r\leq r_0$ which proves \eqref{eq: C_0_ineq}.
	
	Inequality \eqref{eq: C_1_ineq} follows from Lemmas \ref{w2p} and \ref{u-v}. 
	\end{proof}

Theorem \ref{maintheorem} implies $C^{1,1}$ regularity away from $\Gamma^1$ in the case the coefficients $g_i$ are regular enough to provide $C^{1,1}$ solutions away from the free boundary, i.e. Theorem \ref{AB}.
%
%
\begin{rem}

	Note that \textbf{A} is the condition given in Theorem \ref{contcase}. If $g_i$ only depend on $x$, then this reduces to the assumption that the Newtonian potential of $g_i$ is $C^{1,1}$, which is sharp. 
\end{rem}
\begin{proof}[\bf{Proof of Theorem \ref{AB}}]

		Suppose \textbf{A} and \textbf{B} hold. We show that for every $\delta>0$ there exists $C_{\delta}>0$ such that for all $y \in B_{1/2}(0)$ such that $\dist (y,\Gamma^1) \geq \delta$, there exists $r_y>0$ such that
	\begin{equation}\label{eq: Q_estimate}
		\|Q_y(u,r)\|_{L^2(\partial B_1(0))}\leq C_{\delta},
	\end{equation}
	for $r\leq r_y$.
	
	Consequently,
	\begin{equation}\label{eq: Taylor_estimate}
		|u(x)-u(y)-\nabla u(y) (x-y)| \leq \tilde{C}_{\delta}|x-y|^2
	\end{equation}
	for $|x-y|\leq r_y,\ y\in B_{1/2}(0)$ and $\dist (y,\Gamma^1) \geq \delta$; this readily yields the desired result.
	

	Note that \eqref{eq: Taylor_estimate} follows from \eqref{eq: Q_estimate} via Lemmas \ref{w2p} and \ref{u-v}. 
	
	Without loss of generality assume that $\delta\leq r_0$, where $r_0>0$ is the constant from Theorem \ref{maintheorem}. For every $y \in B_{1/2}(0)$ consider the ball $B_{\delta/2}(y)$. Then there are two possibilities.
	\begin{itemize}
		\item[i.] $B_{\delta/2}(y) \cap \Gamma^0=\emptyset$.
		
		In this case $B_{\delta/2} \cap \Gamma=\emptyset$, hence $u$ satisfies the equation
		\[\Delta u = g_i(x,u)
		\]
		in $B_{\delta/2}(y)$ for $i=1$ or $i=2$. Inequality \eqref{eq: 1} in the Theorem \ref{contcase} assumption \textbf{A} yields
		\begin{equation}
			\|Q_y(u,r)\|_{L^2(\partial B_1(0))}\leq C\log \frac{4}{\delta}+C(\|D^2v^i_{u(y)}\|_{\infty}+1),
		\end{equation}
		for $r\leq \delta/4$.
		\item[ii.] $B_{\delta/2}(y) \cap \Gamma^0 \neq \emptyset$.
		
		Let $w\in \Gamma^0$ be such that $d:=|y-w|=\dist (y,\Gamma_0)$. We have that $d\leq \delta/2$. As before, assumption \textbf{A} yields
		\begin{equation}
		\|Q_y(u,r)\|_{L^2(\partial B_1(0))}\leq \|Q_y(u,d/2)\|_{L^2(\partial B_1(0))}+C(\|D^2v^i_{u(y)}\|_{\infty}+1),
		\end{equation}
		for $r\leq d/2$. From Theorem \ref{maintheorem} we have that
		\begin{equation}
			\left|u\left(y+\frac{d}{2}z\right)\right| \leq C\left|y+\frac{d}{2}z-w\right|^2\leq C d^2,
		\end{equation}
		 for all $|z|\leq 1$ because $d\leq \delta/2\leq r_0$. On the other hand
		\begin{align}
			Q_y(u,d/2)&=\Proj _{\Pp_2}\left(\frac{u\left(y+\frac{d}{2}z\right)-\frac{d}{2}z\cdot \nabla u(y)-u(y)}{d^2/4}\right)\\
			&=\Proj _{\Pp_2}\left(\frac{u\left(y+\frac{d}{2}z\right)}{d^2/4}\right),
		\end{align}
		where $\Proj _{\Pp_2}$ is the $L^2(\partial B_1(0))$ projection on the space $\Pp_2$. We have used the fact that the projection of a linear function is 0. Hence
		\begin{equation}
			\|Q_y(u,d/2)\|_{L^2(\partial B_1(0))}\leq \left\|\frac{u\left(y+\frac{d}{2}z\right)}{d^2/4}\right\|_{L^2(\partial B_1(0))}\leq C,
		\end{equation}
		which yields
		\begin{equation}
		\|Q_y(u,r)\|_{L^2(\partial B_1(0))}\leq C+C(\|D^2v^i_{u(y)}\|_{\infty}+1),
		\end{equation}
		for $r\leq d/2$.
	\end{itemize}
	The proof is now complete.
\end{proof}

Lastly we point out that if the coefficients $g_i$ are regular enough to provide $C^{1,1}$ solutions at points where the gradient does not vanish, then we obtain full interior $C^{1,1}$ regularity.

\vskip .3 cm
\textbf{Assumption C.} For any $M>0$ there exist $\theta_0(M,\|g_1\|_{\infty},\|g_2\|_{\infty},n)>0$ and $C_3(M,\|g_1\|_{\infty},\|g_2\|_{\infty},n)>0$ such that for all $z\in B_{1/2}$ any solution of
\begin{equation}
\begin{cases}
\Delta v = g_1(x,v)\chi_{v>0}+g_2(x,v)\chi_{v<0},\ x \in B_{1/2}(z);\\
|v(x)|\leq M,\ x \in B_{1/2}(z);\\
v(z)=0,\ 0<|\nabla v(z)|\leq \theta_0 /4;\\
v\big|_{\partial B_r(z)} \ \text{continuous},
\end{cases}
\end{equation}
admits a bound
\begin{equation}
\|D^2 v\|_{L^{\infty}(B_{|\nabla v(z)|/\theta_0}(z))} \leq C_3.
\end{equation}

\vskip .3 cm

\begin{rem}
A sufficient condition which ensures \textbf{C} is that $g_i$ are H\"older continuous, see \cite[Proposition 2.6]{LSE09} and \cite[Theorem 9.3]{ADN64}. The idea being that at such points, the set $\{u=0\}$ is locally $C^{1,\alpha}$ (via the implicit function theorem) and one may thereby reduce the problem to a classical PDE for which up to the boundary estimates are known. 
\end{rem}

Theorem \ref{maintheorem} and \textbf{C} imply Theorem \ref{ABC}.

\begin{proof}[\textbf{Proof of Theorem \ref{ABC}}]
By Lemmas \ref{u-v} and \ref{c11} the assertion follows if we show that there exist $\rho_0, C>0$ such that for every $y \in B_{\rho_0}(0)$ there exists $r_y>0$ such that
\begin{equation}\label{eq: goal}
	\|Q_y(u,r)\|_{L^2(\partial B_1(0))} \leq C
\end{equation}
       	
for $0<r\leq r_y$.

Let $\rho_0$ be such that $|\nabla u(y)| \leq \theta_0$ for $y \in B_{\rho_0}(0)$, where $\theta_0$ is the constant from assumption \textbf{C} (we can do this because $u$ is $C^{1,\alpha}$ and $0 \in\Gamma^0$). For $y \in B_{\rho_0}(0)$ let $d:=\dist (y,\Gamma)$ and let $w \in \Gamma$ be such that $d=|y-w|$.

 From Corollary \ref{AB} we can assume that $2d<r_0$. One of the following cases is possible.
\begin{itemize}
	\item[i.] $d=0, y \in \Gamma^0$.
	
	In this case we have that \eqref{eq: goal} holds for $r\leq r_0$ by Theorem \ref{maintheorem}.
	
	\item[ii.] $d=0, y \in \Gamma^1$.
	
	Here, \eqref{eq: goal} follows from the assumption $C$.
	
	\item[iii.] $d>0, w \in \Gamma^0$.
	
	$u$ solves $\Delta u=g_i(x,u)$ in $B_{d/2}(y)$ for $i=1$ or $i=2$. Then, by the analysis similar to the one in Corollary \ref{AB} we get that \eqref{eq: goal} holds for $r\leq d/2$.
	
	\item[iv.] $d>0, w \in \Gamma^1$.
	
	From Theorem \ref{maintheorem} we have that
	\begin{equation}\label{eq: ineq}
		|u(z+w)-z\cdot \nabla u(w)| \leq C_1 |z|^2
	\end{equation}
	for $|\nabla u(w)|/\theta_0 \leq |z|\leq r_0$. On the other hand by assumption \textbf{C} we obtain that \eqref{eq: ineq} holds for $|z| \leq |\nabla u(w)|/\theta_0$. Hence, \eqref{eq: ineq} holds for all $z$ such that $|z|\leq r_0$.
	
	By assumption \textbf{A} we have that
	\begin{equation}
		\|Q_y(u,r)\|_{L^2(\partial B_1(0))}\leq \|Q_y(u,d/2)\|_{L^2(\partial B_1(0))}+C(\|D^2v^i_{u(y)}\|_{\infty}+1),
	\end{equation}
	for $r\leq d/2$.
	
	Furthermore,
	\begin{align}
	Q_y(u,d/2)&=\Proj _{\Pp_2}\left(\frac{u\left(y+\frac{d}{2}z\right)-\frac{d}{2}z\cdot \nabla u(y)-u(y)}{d^2/4}\right)\\
	&=\Proj _{\Pp_2}\left(\frac{u\left(y+\frac{d}{2}z\right)-\left(y+\frac{d}{2}z-w\right)\cdot \nabla u(w)}{d^2/4}\right).\\
	\end{align}
	Hence from \eqref{eq: ineq} we get
	\begin{align}
	\|Q_y(u,d/2)\|_{L^2(\partial B_1(0))}&\leq \left\|\frac{u\left(y+\frac{d}{2}z\right)-\left(y+\frac{d}{2}z-w\right)\cdot \nabla u(w)}{d^2/4}\right\|_{L^2(\partial B_1(0))}\\
	&\leq C,
	\end{align}
	which yields
	\begin{equation}
	\|Q_y(u,r)\|_{L^2(\partial B_1(0))}\leq C+C(\|D^2v^i_{u(y)}\|_{\infty}+1),
	\end{equation}
	for $r\leq d/2$.

\end{itemize} 

\end{proof}
The previous analysis applies to the following example. 

\vskip .2in

\textbf{Example.} Let $g_i(x,u)=\lambda_i(x)$ for $i=1,2$, where $\lambda_i$ are such that
\begin{itemize}
	\item[i.] $\lambda_1(x)-\lambda_2(x)\geq \sigma_0>0$ for all $x \in B_1$;
	\item[ii.] $\lambda_1(x),\lambda_2(x)$ are H\"{o}lder continuous.
	\end{itemize}
We recall from the introduction that under the stronger assumption $\inf_{B_1} \lambda_1>0$, $\inf_{B_1}-\lambda_2>0$, this problem is studied in \cite{LSE09} and the optimal interior $C^{1,1}$ regularity is established. The authors use a different approach based on monotonicity formulas and an analysis of global solutions via a blow-up procedure. 


\subsection{No-sign obstacle problem}

Here we observe that assumption \textbf{A} implies that the solutions of \eqref{pro: no-sign} are in $C^{1,1}(B_{1/2})$.
This theorem was proven in \cite{ALS13} (Theorem 1.2) for the case when $g(x,t)$ depends only on $x$. Under assumption \textbf{A},  appropriate modifications of the proof in \cite{ALS13} work also for the general case; since the arguments are similar, we provide only a sketch of the proof and highlight the differences. 

\begin{proof}[\textbf{Sketch of the proof of Theorem \ref{thm: no-sign}}] Let $\tilde{\Gamma}:=\{y\ \text{s.t.}\ u(y)=|\nabla u(y)|=0\}$. For $r>0$ let $\Lambda_r:=\{x \in B_1\ \text{s.t.}\ u(rx)=0 \}$ and $\lambda_r:=|\Lambda_r|$.
	
	The proof of Theorem 1.2 in \cite{ALS13} consists of the following ingredients.
	\begin{itemize}
		\item Interior $C^{1,1}$ estimate
		\item Quadratic growth away from the free boundary
		\item  \cite[Proposition 5.1]{ALS13}
	\end{itemize}
	Let us recall that the interior $C^{1,1}$ estimate is the inequality
	\begin{equation}\label{eq: int_estimate}
	\|u\|_{C^{1,1}(B_{d/2})} \leq C \left(\|g\|_{L^{\infty}(B_d)}+\frac{\|u\|_{L^{\infty}(B_d)}}{d^2}\right),
	\end{equation}
	where $\Delta u (x)= g(x)$ for $x \in B_d$ and the Newtonian potential of $g$ is $C^{1,1}$. This estimate is purely a consequence of $g$ having a $C^{1,1}$ Newtonian potential.
	
	Quadratic growth away from the free boundary is a bound
	\begin{equation}\label{eq: quadgrowth-no-sign}
	|u(x)| \leq C \dist (x,\tilde{\Gamma})^2.
	\end{equation}
	The first observation in \cite{ALS13} is that if $g(x,t)=g(x)$ has a $C^{1,1}$ Newtonian potential, then \eqref{eq: quadgrowth-no-sign} and \eqref{eq: int_estimate} yield $C^{1,1}$ regularity for the solution. Indeed, "far" from the free boundary, the solution $u$ solves the equation $\Delta u=g(x)$ and is locally $C^{1,1}$ by assumption. For points close to the free boundary, $u$ solves the same equation but now on a small ball centered at the point of interest and touching the free boundary. At this point one invokes \eqref{eq: quadgrowth-no-sign} and by \eqref{eq: int_estimate} obtains that the $C^{1,1}$ bound does not blow up close to the free boundary (see Lemma 4.1 in \cite{ALS13}).
	
	To prove \eqref{eq: quadgrowth-no-sign}, the authors prove in Proposition 5.1 \cite{ALS13} that if the projection $\Pi_y(u,r)$ (for some $y \in \tilde{\Gamma}$) is large enough then the density $\lambda_r$ of the coincidence set diminishes at an exponential rate. On the other hand, if $\lambda_r$ diminishes in an exponential rate, $\Pi_y(u,r)$ has to be bounded. Consequently, by invoking Lemma \ref{w2p} one obtains \eqref{eq: quadgrowth-no-sign}.
	
	Now let $g$ satisfy \textbf{A}.
	\begin{itemize}
		\item Interior $C^{1,1}$ estimate
		
		In the general case, \eqref{eq: int_estimate} is replaced by
		\begin{equation}\label{eq: int_estimate_general}
		\|Q_y(u,s)\|_{L^2(\partial B_1(0))}\leq \|Q_y(u,r)\|_{L^2(\partial B_1(0))}+C(\|D^2v_{u(y)}\|_{\infty}+1),
		\end{equation}
		where $0<s<r<d,\ \Delta v_{u(y)}=g(x,u(y))$ and $\Delta u=f(x,u)$ in $B_d(y)$. Estimate \eqref{eq: int_estimate_general} is purely a consequence of assumption \textbf{A} (see \eqref{eq: 1} in the proof of Theorem \ref{contcase}).
		
		\item \cite[Proposition 5.1]{ALS13}
		
		In this proposition, it is shown that there exists $C$ such that if $\Pi_y(u,r)\geq C$ then
		\begin{equation}\label{eq: exp_decay_ALS}
		\lambda_{r/2}^{1/2}\leq \frac{\tilde{C}}{\|\Pi_y(u,r)\|_{L^\infty(B_1)}}\lambda_{r}^{1/2}
		\end{equation}
		for some $\tilde{C}>0$. The inequality is obtained by the decomposition
		\[\frac{u(rx+y)}{r^2}=\Pi_y(u,r)+h_r+w_r,
		\]
		where $h_r,w_r$ are such that
		\begin{equation}
		\begin{cases}
		\Delta h_r=-g(rx+y)\chi_{\Lambda_r} & \text{in}\ B_1,\\
		h_r=0 &\text{on}\ \partial B_1,
		\end{cases}
		\end{equation}
		and
		\begin{equation}
		\begin{cases}
		\Delta w_r=g(rx+y) & \text{in}\ B_1,\\
		w_r= \frac{u(rx+y)}{r^2}-\Pi_y(u,r)&\text{on}\ \partial B_1.
		\end{cases}
		\end{equation}
		The authors show that
		\begin{align}\label{eq: 5.1ineq}
		\|D^2h_r\|_{L^2(B_{1/2})}&\leq C \|g\|_{L^\infty}\|\chi_{\Lambda_r}\|_{L^2(B_1)},\\ \nonumber
		\|D^2w_r\|_{L^{\infty}(B_{1/2})}&\leq C \left(\|g\|_{L^\infty}+\|u\|_{L^{\infty}(B_1)}\right).
		\end{align}
		In the general case one may consider the decomposition
		\[\frac{u(rx+y)}{r^2}=Q_y(u,r)+h_r+w_r+z_r,
		\]
		where $h_r,w_r,z_r$ are such that
		\begin{equation}
		\begin{cases}
		\Delta h_r=-g(rx+y,0)\chi_{\Lambda_r} & \text{in}\ B_1,\\
		h_r=0 &\text{on}\ \partial B_1,
		\end{cases}
		\end{equation}
		and
		\begin{equation}
		\begin{cases}
		\Delta w_r=g(rx+y,0) & \text{in}\ B_1,\\
		w_r= \frac{u(rx+y)}{r^2}-Q_y(u,r)&\text{on}\ \partial B_1,
		\end{cases}
		\end{equation}
		and
		\begin{equation}
		\begin{cases}
		\Delta z_r=\left(g(rx+y,u(rx+y))-g(rx+y,0)\right)\chi_{B_1\setminus \Lambda_r} & \text{in}\ B_1,\\
		z_r= 0&\text{on}\ \partial B_1.
		\end{cases}
		\end{equation}
		Evidently, estimates \eqref{eq: 5.1ineq} are still valid. Additionally, we have 
		\begin{align}\label{eq: 5.1ineq_general}
		\|D^2z_r\|_{L^2(B_{1/2})}\leq C\|\Delta z_r\|_{L^2(B_1)}\leq  C \omega(r^2 \log \frac{1}{r}),
		\end{align}
		since $g(x,t)$ is uniformly Dini in $t$.
		
		Combining \eqref{eq: 5.1ineq} and \eqref{eq: 5.1ineq_general} and arguing as in \cite{ALS13} one obtains the existence of $C>0$ such that
		\begin{align}\label{eq: exp_decay}
		\lambda_{r/2}^{1/2}\leq \frac{\tilde{C}}{\|Q_y(u,r)\|_{L^2(\partial B_1)}}\lambda_{r}^{1/2}+\omega\left(r^2\log \frac{1}{r}\right),
		\end{align}
		whenever $\|Q_y(u,r)\|_{L^2(\partial B_{1})} \geq C$.
		\item Quadratic growth away from the free boundary
		
		In \cite{ALS13}, the norms of $\Pi_y(u,r/2^k),\ k\geq 1$ are estimated in terms of the sum $\sum\limits_{j=0}^{\infty} \lambda_{r/2^j}$.  If the norms of projections are unbounded, one obtain estimate \eqref{eq: exp_decay_ALS} which implies convergence of the previous sum and hence boundedness of the projections. This is a contradiction.
		
		Similarly, in the general case the norms of $Q_y(u,r/2^k),\ k\geq 1$ can be estimated by \[\sum\limits_{j=0}^{\infty} \lambda_{r/2^j}+\sum\limits_{j=0}^{\infty}\omega \left(\left(\frac{r}{2^k}\right)^2 \log\frac{2^k}{r^2} \right).\]
		Inequality \eqref{eq: exp_decay} and Dini continuity imply 
		\[\sum\limits_{j=0}^{\infty}\omega \left(\left(\frac{r}{2^k}\right)^2 \log\frac{2^k}{r^2} \right)\ ,\ \sum\limits_{j=0}^{\infty} \lambda_{r/2^j}<\infty,\]
		if the norms of projections are unbounded. Furthermore, one completes the proof of the quadratic growth as in \cite{ALS13}.
		
		To verify that the above ingredients imply $C^{1,1}$ regularity, we split the analysis into two cases. If we are "far" from the free boundary, $u$ locally solves $\Delta u=g(x,u)$ so by Theorem 3.1 $u$ is $C^{1,1}$. If we are close to the free boundary then $u$ solves $\Delta u=g(x,u)$ in a small ball $B_d(y)$ that touches the free boundary. We invoke \eqref{eq: int_estimate_general} for $0<s<r=d/2$ and the quadratic growth to obtain
		\begin{align}
		\|Q_y(u,s)\|_{L^2(\partial B_1(0))}&\leq \|Q_y(u,d/2)\|_{L^2(\partial B_1)}+C(\|D^2v_{u(y)}\|_{\infty}+1)\\
		&\leq C\bigg\|\frac{u(y+d/2 x)}{d^2/4}\bigg\|_{L^2(\partial B_1)}+C(\|D^2v_{u(y)}\|_{\infty}+1)\\
		&\leq C+C(\|D^2v_{u(y)}\|_{\infty}+1).
		\end{align}
		for $s\leq d/2$.
		
		So there exists a constant $C$ such that for all $y \in B_{1/2}$ there exist radii $r_j(y) \to 0$ such that
		\[Q_y(u,r_j(y))\leq C.
		\]
		We conclude via Lemma \ref{c11}.
	\end{itemize}
	\end{proof}
	
	\paragraph{Acknowledgements} We thank Henrik Shahgholian for introducing us to the regularity problem for semilinear equations. Special thanks go to John Andersson for valuable feedback on a preliminary version of the paper.  E. Indrei acknowledges partial support from NSF Grants OISE-0967140 (PIRE), DMS-0405343, and DMS-0635983 administered by the Center for Nonlinear Analysis at Carnegie Mellon University and an AMS-Simons Travel Grant. L. Nurbekyan was partially supported by KAUST baseline and start-up funds and KAUST SRI, Uncertainty Quantification Center in Computational Science and Engineering.

\bibliographystyle{amsalpha}
\bibliography{References}
				
\pagebreak
				
\signei
				
\signam
				
\signln
				
\end{document}